\def\Ddots{\mathinner{\mkern1mu\raise\p@
\vbox{\kern7\p@\hbox{.}}\mkern2mu
\raise4\p@\hbox{.}\mkern2mu\raise7\p@\hbox{.}\mkern1mu}}
\newtheorem{theorem}{Theorem}[section]
\newtheorem{corollary}[theorem]{Corollary}
\newtheorem{lemma}[theorem]{Lemma}
\newtheorem{question}[theorem]{Question}
\theoremstyle{definition}
\newtheorem{definition}[theorem]{Definition}
\begin{document}
\title{Monocromaticity of additive and multiplicative central sets and Goswami's theorem in large Integral Domains}

\date{}
	\author{Pintu Debnath
		\footnote{Department of Mathematics,
			Basirhat College,
			Basirhat-743412, North 24th parganas, West Bengal, India.\hfill\break
			{\tt pintumath1989@gmail.com}}
		\and
	Sourav Kanti Patra
	\footnote{Department of Mathematics,
		GITAM University Hyderabad Campus,
		Hyderabad, Telangana- 502329, India. \hfill\break
		{\tt spatra3@gitam.edu}	
		}
	}

\maketitle

\begin{abstract}
	In \cite{Fi} A. Fish proved that if $E_{1}$ and	$E_{2}$ are two subsets of $\mathbb{Z}$ of positive upper Banach density, then there exists $k\in\mathbb{Z}\setminus{0}$ such that $k\cdot\mathbb{Z}\subset\left(E_{1}-E_{1}\right)\cdot\left(E_{2}-E_{2}\right)$. In \cite{G}, S. Goswami proved  the same but a fundamental result on the set of prime numbers $\mathbb{P}$ in $\mathbb{N}$ and proved that for some  $k\in\mathbb{N}$,  $k\cdot\mathbb{N}\subset\left(\mathbb{P}-\mathbb{P}\right)\cdot\left(\mathbb{P}-\mathbb{P}\right)$. To do so, Goswami mainly proved that the product of an $IP^{\star}$-set with an $IP_{r}^{\star}$-set contains $k\mathbb{N}$. This result is very important and surprising to mathematicians who are aware of combinatorially rich sets. In this article,
	\begin{itemize}
		\item we extend Goswami's result to large Integral Domain, that behave like $\mathbb{N}$ in the  sense of some combinatorics.
		\item we prove that  for a combinatorially rich ($CR$-set) set, $A$, for some  $k\in\mathbb{N}$, $k\cdot\mathbb{N}\subset\left(A-A\right)\cdot\left(A-A\right)$.
		\item we provide a new proof that if we partition a large Integral Domain, $R$, into finitely many cells, then at least one cell is both additive and multiplicative central, and we prove the converse part, which is a new and unknown result.
	\end{itemize} 
\end{abstract}

\textbf{Keywords:} $IP^{\star}$-set, $IP_{r}^{\star}$-set, Large Integral Domain, Central set, $CR$-set, Algebra of the Stone-\v{C}ech 

compactifications of discrete semigroup.

\textbf{MSC 2020:} 05D10, 22A15, 54D35
\section{Introduction}

Let $\mathbb{P}$ be the set of prime numbers in $\mathbb{N}$. The famous conjecture of J. Maillet \cite{M5} is that \textbf{every even number can be written as the difference of two primes} i.e., $2\mathbb{N}\subset \mathbb{P}-\mathbb{P}$. In \cite{HuS} Huang and Sheng proved a surprising result in support of Millet's conjecture. To state this, we need some definitions. Let $\left(S,+\right)$ be a commutative semigroup and $A\subseteq S$.
	\begin{itemize}
		\item The set $A$ is $IP$-set if and only if  there exists a sequence $\left\{ x_{n}\right\} _{n=1}^{\infty}$ in $S$ such that $FS\left(\left\{ x_{n}\right\} _{n=1}^{\infty}\right)\subseteq A$. Where $$ FS\left(\left\{ x_{n}\right\} _{n=1}^{\infty}\right)=\left\{ \sum_{n\in F}x_{n}:F\in\mathcal{P}_{f}\left(\mathbb{N}\right)\right\}\,. $$
		
		\item The set $A$ is called $IP^{\star}$-set, when it intersects with all $IP$-sets.
		
		\item  Let $r\in \mathbb{N}$. The set $A$ is $IP_{r}$-set if and only if  there exists a sequence $\left\{ x_{n}\right\} _{n=1}^{r}$ in $S$ such that $FS\left(\left\{ x_{n}\right\} _{n=1}^{r}\right)\subseteq A$. Where $$ FS\left(\left\{ x_{n}\right\} _{n=1}^{r}\right)=\left\{ \sum_{n\in F}x_{n}:F\subseteq\{1,2,\ldots,r\}\right\}. $$
		
		\item Let $r\in \mathbb{N}$. The set $A$ is called $IP_{r}^{\star}$-set, when it intersects with all $IP_{r}$-sets.
		
		\item Let $r\in \mathbb{N}$. A set $A\subset\mathbb{N}$ is said to be a $\varDelta_{r}$-set if
		there exists a set $S\subset\mathbb{N}$ with $\vert S\vert=r$ such
		that $A=\left\{ s-t:s>t\text{ and }s,t\in S\right\} .$
		
		\item Let $r\in \mathbb{N}$. The set $A\subset\mathbb{N}$ is called $\varDelta_{r}^{\star}$-set, when it intersects with all $\varDelta_{r}$-set.
		
	\end{itemize}

In \cite{HuS} Huang and Sheng proved the following:
\begin{theorem}
	There exists $r\in\mathbb{N}$ such that $\mathbb{P}-\mathbb{P}$ is $\varDelta_{r}^{\star}$-set.
\end{theorem}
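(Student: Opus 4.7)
The plan is to combine Maynard's theorem on primes in admissible tuples with an elementary pigeonhole argument. We want, for some fixed $r$, that every $r$-element $S \subset \mathbb{N}$ contains a pair $s > t$ with $s - t \in \mathbb{P} - \mathbb{P}$; equivalently, some pair $s > t$ in $S$ and some prime $q$ satisfy $q + (s - t) \in \mathbb{P}$.

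First, recall Maynard's theorem in the following form: there exists $k_0 \in \mathbb{N}$ such that every admissible $k_0$-tuple of integers $(h_1, \ldots, h_{k_0})$---meaning that for each prime $q$ the residues $\{h_i \bmod q\}$ do not exhaust $\mathbb{Z}/q\mathbb{Z}$---yields, for infinitely many $n \in \mathbb{N}$, at least two primes among $\{n + h_1, \ldots, n + h_{k_0}\}$. Applied to any admissible $k_0$-subset $T \subseteq S$ written as $T = \{h_1 < h_2 < \ldots < h_{k_0}\}$, this produces $n$ and indices $i \neq j$ such that $n + h_i$ and $n + h_j$ are both prime. Then $h_i - h_j = (n+h_i) - (n+h_j) \in \mathbb{P} - \mathbb{P}$, and this difference lies in the set of pairwise differences of $S$.

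It therefore suffices to find an admissible $k_0$-element subset inside every sufficiently large $S$. Set $Q = \prod_{p \leq k_0,\, p\text{ prime}} p$ and $r = (k_0 - 1) Q + 1$. If $|S| = r$, then by pigeonhole some residue class modulo $Q$ contains at least $k_0$ elements of $S$; let $T$ be any $k_0$-element such subset. For every prime $q \leq k_0$, all members of $T$ share the same residue mod $Q$ and hence mod $q$, giving $|T \bmod q| = 1 < q$. For every prime $q > k_0$, trivially $|T \bmod q| \leq k_0 < q$. So $T$ is admissible, and the previous paragraph produces the required difference of two primes as a pairwise difference of $S$, proving that $\mathbb{P} - \mathbb{P}$ is $\varDelta_r^\star$.

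The main obstacle is that Maynard's theorem supplies the entire analytic engine: the combinatorial reduction is a short pigeonhole, but the existence of such $k_0$ is a deep sieve-theoretic result. A purely ergodic or correspondence-principle proof appears out of reach, because the assertion that $\mathbb{P} - \mathbb{P}$ is $\varDelta_r^\star$ is essentially a bounded-gaps-type statement for primes and seems to require analytic input of Maynard strength; the elementary pigeonhole packaging above is the cleanest way to extract the $\varDelta_r^\star$ conclusion from it.
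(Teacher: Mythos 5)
Your argument is correct. Note that the paper itself gives no proof of this statement: it is quoted verbatim from Huang and Wu \cite{HuS}, and your write-up essentially reconstructs their argument --- a pigeonhole modulo $Q=\prod_{p\le k_0}p$ to extract an admissible $k_0$-subtuple from any set of $r=(k_0-1)Q+1$ integers, followed by an application of the Maynard--Tao theorem that every admissible $k_0$-tuple contains two primes infinitely often. The pigeonhole step, the admissibility check (one residue class occupied for primes $q\le k_0$, at most $k_0<q$ classes for $q>k_0$), and the extraction of a positive difference $h_i-h_j\in(\mathbb{P}-\mathbb{P})$ are all sound, and you are right that the analytic content is irreducibly of Maynard strength --- no purely combinatorial or ergodic substitute is known.
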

 Let $A$ be a subset of $\mathbb{N}$ with positive upper Banach density, then it is very well known result that $A-A$ is an $IP_{r}^{\star}$-set for some $r\in \mathbb{N}$. The set of prime numbers $\mathbb{P}$ has zero upper Banach density, but  a  simple consequence of the  above theorem is that $\mathbb{P}-\mathbb{P}$ is $IP_{r}^{\star}$-set in $\mathbb{N}$. Now $2\mathbb{N}$ is 	$IP_{2}^{\star}$-set. So we may claim that the result of Huang and Sheng is a partial answer to Maillet's conjecture on $\mathbb{P}-\mathbb{P}$. 

Let $A$ be a $CR$-set, introduced by Bergelson and Glasscock in \cite{BG}. The upper Banach density of $A$ may be zero. In spite of that,   in \textbf{Section 3}, we prove that for some $r\in \mathbb{N}$,  ${A-A}$ is an $IP_{r}^{\star}$-set for some $r\in \mathbb{N}$.

 In \cite{Fi} A. Fish proved that if $E_{1}$ and	$E_{2}$ are two subsets of $\mathbb{Z}$ of positive upper Banach density, then there exists $k\in\mathbb{Z}\setminus{0}$ such that $$k\cdot\mathbb{Z}\subset\left(E_{1}-E_{1}\right)\cdot\left(E_{2}-E_{2}\right).$$
 
Despite being  the zero upper Banach  density of $\mathbb{P}$, in \cite[Theorem 1.2]{G}  , S. Goswami proved  the following theorem:
\begin{theorem}{\cite[Theorem 1.12]{G}}
 There exists $k\in\mathbb{N}$ such that $$k\cdot\mathbb{N}\subset\left(\mathbb{P}-\mathbb{P}\right)\cdot\left(\mathbb{P}-\mathbb{P}\right).$$	
\end{theorem}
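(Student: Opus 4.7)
The plan is to deduce this theorem from two ingredients: the Huang--Sheng theorem quoted above, and Goswami's key product lemma (the product of an $IP^{\star}$-set with an $IP_{r}^{\star}$-set contains $k\mathbb{N}$). The first step is to establish that $\mathbb{P}-\mathbb{P}$ is an $IP_{r}^{\star}$-set in $(\mathbb{N},+)$ for some $r$. Huang--Sheng gives that $\mathbb{P}-\mathbb{P}$ is $\Delta_{r}^{\star}$ for some $r$, and I would use the standard implication that any symmetric $\Delta_{r+1}^{\star}$-set is $IP_{r}^{\star}$: for an arbitrary $IP_{r}$-set $FS(\{y_{1},\ldots,y_{r}\})$, form the $r+1$ partial sums $s_{0}=0,\; s_{1}=y_{1},\;\ldots,\;s_{r}=y_{1}+\cdots+y_{r}$. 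Their pairwise differences lie in $FS(\{y_{1},\ldots,y_{r}\})\cup -FS(\{y_{1},\ldots,y_{r}\})$, so the symmetry of $\mathbb{P}-\mathbb{P}$ forces a nonempty intersection with $FS(\{y_{1},\ldots,y_{r}\})$ itself.

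Second, I would invoke Goswami's product lemma: for any $IP^{\star}$-set $A$ and $IP_{r}^{\star}$-set $B$ in $(\mathbb{N},+)$, there exists $k\in\mathbb{N}$ with $k\cdot\mathbb{N}\subseteq A\cdot B$. This lemma is to be proved via a Stone--\v{C}ech argument combining additive idempotents of $\beta\mathbb{N}$ (which contain every $IP^{\star}$-set) with the multiplicative semigroup action on $\beta\mathbb{N}$, isolating a minimal left ideal whose preimage in $\mathbb{N}$ yields the desired $k\cdot\mathbb{N}$. Applied with $B=\mathbb{P}-\mathbb{P}$ and an appropriate $A$ contained in (or equal to) $\mathbb{P}-\mathbb{P}$, this delivers the theorem.

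The main obstacle is bridging the two steps: Step 1 supplies only an $IP_{r}^{\star}$ structure on $\mathbb{P}-\mathbb{P}$, whereas Step 2 requires one factor to be $IP^{\star}$. My plan is to verify, independently, that $\mathbb{P}-\mathbb{P}$ is in fact $IP^{\star}$---equivalently, that it belongs to every additive idempotent of $\beta\mathbb{N}$---an input I expect to follow from the same sieve-theoretic principles underlying Huang--Sheng, together with the fact that differences of a set with enough combinatorial structure in arithmetic progressions lie in every additive idempotent. This $IP^{\star}$-upgrade (or alternatively, a sharpening of the product lemma to require only two $IP_{r}^{\star}$-factors) is where I expect the principal technical difficulty to lie; in comparison, the combinatorial passage in the first step and the subsequent bookkeeping are routine.
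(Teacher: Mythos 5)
Your overall route is the same as the one the paper attributes to Goswami: (i) use Huang--Sheng to get that $\mathbb{P}-\mathbb{P}$ is $IP_{r}^{\star}$ for some $r$, and (ii) feed $\mathbb{P}-\mathbb{P}$ into Goswami's product theorem (Theorem \ref{Goswami's theorem}) as both factors. Step (i) is fine: the passage from $\varDelta_{r}^{\star}$ to $IP_{r}^{\star}$ via partial sums is the standard argument (though in $\mathbb{N}$ you do not need any symmetry of $\mathbb{P}-\mathbb{P}$, since with $s_{i}=y_{1}+\cdots+y_{i}$ for $i=1,\ldots,r$ all differences $s_{j}-s_{i}$ with $j>i$ already lie in $FS\left(\left\{ y_{n}\right\} _{n=1}^{r}\right)$ itself).

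The problem is your third paragraph. The ``main obstacle'' you identify --- upgrading $\mathbb{P}-\mathbb{P}$ from $IP_{r}^{\star}$ to $IP^{\star}$ --- is not an obstacle at all, and your plan to attack it with sieve-theoretic input is misdirected. Every $IP$-set contains an $IP_{r}$-set for every $r$: if $FS\left(\left\{ x_{n}\right\} _{n=1}^{\infty}\right)\subseteq E$ then $FS\left(\left\{ x_{n}\right\} _{n=1}^{r}\right)\subseteq E$. Hence the family of $IP_{r}$-sets contains the family of $IP$-sets, and any $IP_{r}^{\star}$-set automatically meets every $IP$-set, i.e.\ is $IP^{\star}$. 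So once Step 1 is done, both hypotheses of Theorem \ref{Goswami's theorem} hold with $A=B=\mathbb{P}-\mathbb{P}$, and the theorem follows immediately; there is nothing left to bridge. Where the real work lies is in the product theorem itself, which you only gesture at; its proof (see the generalization in Section 4 of this paper) is not a minimal-left-ideal argument but an elementary one built on the $IP^{\star}$-set $D=A\cap\bigcap_{y\in FS\left(\left\{ b_{i}\right\} _{i=1}^{s}\right)}yA$ together with the fact that dilates of $IP^{\star}$-sets remain $IP^{\star}$. If you intend to prove that lemma rather than cite it, that is the part of your writeup that needs to be made precise.
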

The main thing S. Goswami did to prove the above theorem is the following:
\begin{theorem}\label{Goswami's theorem}(Goswami's theorem \cite[Theorem 2.1]{G}):
	Let $r\in\mathbb{N}$ and let $A,B\subseteq\mathbb{N}$
	be $IP^{\star}$ set and $IP_{r}^{\star}$ sets respectively. Then
	there exists $k\in A$ such that $k\cdot\mathbb{N}\subseteq A\cdot B.$ 
\end{theorem} 
 
 In \textbf{Section 4}, We will extend the above theorem for large Lntegral Domain and as a consequence we will prove that for a $CR$-set  $A$ in $\mathbb{N}$, $$k\mathbb{N}\in (A-A)(A-A)$$ for some $k\in \mathbb{N}$.
 
 Now it is essential to  give a brief review of algebraic structure of the Stone-\v{C}ech
compactification of a semigroup $\left(S,\cdot\right)$, not necessarily commutative with the discrete topology to present what we want to do at remaining sections.

The set $\{\overline{A}:A\subset S\}$ is a basis for the closed sets
of $\beta S$. The operation `$\cdot$' on $S$ can be extended to
the Stone-\v{C}ech compactification $\beta S$ of $S$ so that $(\beta S,\cdot)$
is a compact right topological semigroup (meaning that for each    $p\in\beta$ S the function $\rho_{p}\left(q\right):\beta S\rightarrow\beta S$ defined by $\rho_{p}\left(q\right)=q\cdot p$ 
is continuous) with $S$ contained in its topological center (meaning
that for any $x\in S$, the function $\lambda_{x}:\beta S\rightarrow\beta S$
defined by $\lambda_{x}(q)=x\cdot q$ is continuous). This is a famous
Theorem due to Ellis that if $S$ is a compact right topological semigroup
then the set of idempotents $E\left(S\right)\neq\emptyset$. A nonempty
subset $I$ of a semigroup $T$ is called a $\textit{left ideal}$
of $S$ if $TI\subset I$, a $\textit{right ideal}$ if $IT\subset I$,
and a $\textit{two sided ideal}$ (or simply an $\textit{ideal}$)
if it is both a left and right ideal. A $\textit{minimal left ideal}$
is the left ideal that does not contain any proper left ideal. Similarly,
we can define $\textit{minimal right ideal}$ and $\textit{smallest ideal}$.

Any compact Hausdorff right topological semigroup $T$ has the smallest
two sided ideal

$$
\begin{aligned}
	K(T) & =  \bigcup\{L:L\text{ is a minimal left ideal of }T\}\\
	&=  \bigcup\{R:R\text{ is a minimal right ideal of }T\}.
\end{aligned}$$

Given a minimal left ideal $L$ and a minimal right ideal $R$, $L\cap R$
is a group, and in particular contains an idempotent. If $p$ and
$q$ are idempotents in $T$ we write $p\leq q$ if and only if $pq=qp=p$.
An idempotent is minimal with respect to this relation if and only
if it is a member of the smallest ideal $K(T)$ of $T$. Given $p,q\in\beta S$
and $A\subseteq S$, $A\in p\cdot q$ if and only if the set $\{x\in S:x^{-1}A\in q\}\in p$,
where $x^{-1}A=\{y\in S:x\cdot y\in A\}$. See \cite{HS} for
an elementary introduction to the algebra of $\beta S$ and for any
unfamiliar details.

  Let $A$ be a subset of $S$. Then $A$ is called central if and only if $A\in p$, for some minimal idempotent of $\beta S$ and $A$ is called central${}^{\star}$ if and only if  $A$ intersects with all central sets. From \cite[Corollary 5.21.1]{HS}, We know the following theorem:
\begin{theorem}
	Let $r\in \mathbb{N}$ and $\mathbb{N}=C_{1}\cup C_{2}\cup\ldots\cup C_{r}$. Then there exists $i\in \{1,2,\ldots,r\}$ such that $C_{i}$ is both additive and multiplicative central.
\end{theorem}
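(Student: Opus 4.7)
The plan is to exploit the two semigroup structures on $\beta\mathbb{N}$ and produce a single ultrafilter that is simultaneously a minimal idempotent in $(\beta\mathbb{N},\cdot)$ and a limit of minimal idempotents of $(\beta\mathbb{N},+)$; any cell containing that ultrafilter will then be both additively and multiplicatively central. Writing $E^{+}=E(K(\beta\mathbb{N},+))$ for the set of additive minimal idempotents, the key step is to prove that $\overline{E^{+}}$ is a left ideal of $(\beta\mathbb{N},\cdot)$. Once this is in place, a compact right-topological left ideal contains a minimal $(\cdot)$-left ideal, and inside that we can pick a minimal multiplicative idempotent $q\in\overline{E^{+}}$.

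To establish that $\overline{E^{+}}$ is a multiplicative left ideal, I first check that for each $n\in\mathbb{N}$ the map $\lambda_{n}^{\cdot}:p\mapsto n\cdot p$ is a continuous injective additive homomorphism from $(\beta\mathbb{N},+)$ onto the closed subsemigroup $\overline{n\mathbb{N}}$; unravelling the definitions, both $A\in n\cdot(p+q)$ and $A\in(n\cdot p)+(n\cdot q)$ reduce to $\{x:\{y:nx+ny\in A\}\in q\}\in p$. Hence $\lambda_{n}^{\cdot}$ is a topological isomorphism $\beta\mathbb{N}\cong\overline{n\mathbb{N}}$ sending $K(\beta\mathbb{N},+)$ onto $K(\overline{n\mathbb{N}},+)$. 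Since $n\mathbb{N}$ is syndetic we have $\overline{n\mathbb{N}}\cap K(\beta\mathbb{N},+)\neq\emptyset$, and the standard identity $K(T)=T\cap K(S)$ for closed subsemigroups meeting $K(S)$ (see \cite{HS}) forces $K(\overline{n\mathbb{N}},+)\subseteq K(\beta\mathbb{N},+)$. Combining these, $n\cdot E^{+}\subseteq E^{+}$ for every $n\in\mathbb{N}$. To upgrade this to $\beta\mathbb{N}\cdot\overline{E^{+}}\subseteq\overline{E^{+}}$, I pick $p\in\beta\mathbb{N}$, write $p=\lim_{\alpha}n_{\alpha}$ with $n_{\alpha}\in\mathbb{N}$, and invoke right-continuity of $\rho_{q}$: for $q\in\overline{E^{+}}$ approximated by $q_{\beta}\in E^{+}$, the continuity of each $\lambda_{n_{\alpha}}^{\cdot}$ gives $n_{\alpha}\cdot q=\lim_{\beta}n_{\alpha}\cdot q_{\beta}\in\overline{E^{+}}$, and then $p\cdot q=\lim_{\alpha}n_{\alpha}\cdot q\in\overline{E^{+}}$.

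With the minimal multiplicative idempotent $q\in\overline{E^{+}}$ secured, the partition $\mathbb{N}=C_{1}\cup\cdots\cup C_{r}$ yields an index $i$ with $C_{i}\in q$, so $C_{i}$ is multiplicatively central by definition. Because $q\in\overline{E^{+}}$, the basic clopen neighbourhood $\overline{C_{i}}$ of $q$ in $\beta\mathbb{N}$ meets $E^{+}$; any $p\in\overline{C_{i}}\cap E^{+}$ is an additive minimal idempotent containing $C_{i}$, so $C_{i}$ is additively central as well, completing the proof. The genuine obstacle in this plan is the inclusion $n\cdot E^{+}\subseteq E^{+}$: multiplication by $n$ is a priori alien to the additive structure, and it is only the syndeticity of $n\mathbb{N}$ together with the abstract algebra of closed subsemigroups of $(\beta\mathbb{N},+)$ that forces additive minimality to survive this multiplicative action. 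Everything after that is a routine continuity and closure argument.
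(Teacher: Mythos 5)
Your argument is correct, and every step you flag as delicate does go through: $\lambda_{n}^{\cdot}$ really is the continuous extension of the injective additive homomorphism $x\mapsto nx$, the identity $K(T)=T\cap K(\beta\mathbb{N},+)$ for a closed subsemigroup $T$ meeting $K(\beta\mathbb{N},+)$ (applied to $T=\overline{n\mathbb{N}}$, which meets $K$ because $n\mathbb{N}$ is syndetic) does force $n\cdot E^{+}\subseteq E^{+}$, and the two-stage limit argument using continuity of $\lambda_{n}^{\cdot}$ and of $\rho_{q}$ correctly upgrades this to $\beta\mathbb{N}\cdot\overline{E^{+}}\subseteq\overline{E^{+}}$. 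This is essentially the classical Hindman--Strauss proof that the paper is citing for this theorem (the paper itself offers no proof of the statement, only the reference to \cite{HS}). It is, however, genuinely different from the route the paper takes when it reproves the result in Section 5 for large integral domains: there the authors never touch $\overline{E(K(\beta\mathbb{N},+))}$, but instead observe that the union $A$ of the additively central cells is additive central${}^{\star}$, that $r^{-1}A$ remains central${}^{\star}$ for every $r\neq 0$ (so finite intersections $\bigcap_{i}r_{i}^{-1}A$ are nonempty, making $A$ multiplicatively thick, hence multiplicatively central), and finally that centrality is partition regular, so one of the additively central cells is also multiplicatively central. Your approach buys more than the partition statement: it produces a single multiplicative minimal idempotent $q\in\overline{E^{+}}$ every member of which is simultaneously additively and multiplicatively central. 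The paper's approach buys portability: it avoids the left-ideal lemma about closures of idempotents entirely and transfers with almost no change to the large-integral-domain setting, which is the point of their Section 5; your proof would also transfer, but you would need to recheck that $\overline{rR}$ meets $K(\beta R,+)$, which is exactly where the largeness hypothesis (finite index of $rR$) would enter.
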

 
 In \textbf{Section 5}, we will prove that for an integral domain $R$, the above theorem is  true if and only if $R$ is a large Integral Domain.

\section{Large Integral Domain and $IP^{\star}$-sets}

\begin{definition}
Let $R$ be an integral domain. Then $R$ is called large Integral Domain,  if for any non-trivial ideal of $R$ is of finite index.
\end{definition}

The following theorem can easily explain why we consider Integral Domain  instead of arbitrary commutative ring with all non-trivial ideals are of finite indices. We think,  this is a very well known result in algebra but we don't have a specific reference, so we  present the proof.

\begin{theorem}
	Let $R$ be an infinite ring and  any non-trivial principal ideal of $R$ is of finite index. Then $R$ is an Integral domain i.e., large Integral Domain.
\end{theorem}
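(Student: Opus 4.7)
The plan is twofold: (i) prove $R$ has no zero divisors, which is the main content, and (ii) check that every non-trivial ideal (not just every principal one) has finite index, so that $R$ is actually a large integral domain in the sense of the paper's definition. Throughout I treat $R$ as a commutative ring with unity, as needed to parse ``integral domain''.

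For (i) I argue by contradiction. Suppose $a,b\in R\setminus\{0\}$ satisfy $ab=0$. Both principal ideals $(a)$ and $(b)$ are non-trivial: they are nonzero since $a,b\neq 0$, and if, say, $(a)=R$ then $a$ would be a unit, forcing $b=a^{-1}(ab)=0$, a contradiction; symmetrically $(b)\neq R$. The hypothesis therefore gives $[R:(a)]<\infty$ and $[R:(b)]<\infty$. Now consider the additive-group homomorphism $\phi_a:R\to R$, $x\mapsto ax$, whose kernel is the annihilator $\mathrm{Ann}(a)=\{x\in R:ax=0\}$ and whose image is $(a)$; this yields the additive-group isomorphism $R/\mathrm{Ann}(a)\cong (a)$. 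Since $b\in\mathrm{Ann}(a)$ we have $(b)\subseteq\mathrm{Ann}(a)$, so $[R:\mathrm{Ann}(a)]\le[R:(b)]<\infty$, and hence $(a)\cong R/\mathrm{Ann}(a)$ is \emph{finite}. On the other hand $R$ is the disjoint union of the $[R:(a)]<\infty$ additive cosets of $(a)$, each with the same cardinality as $(a)$, so $(a)$ must be infinite since $R$ is. Contradiction.

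For (ii), let $I$ be any non-trivial ideal of $R$ and pick any nonzero $a\in I$. Then $(a)\subseteq I$, and $(a)\neq R$ (otherwise $I=R$), so $(a)$ is a non-trivial principal ideal and has finite index by hypothesis, whence $[R:I]\le[R:(a)]<\infty$.

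The main obstacle is Step (i), and within it the observation that $\mathrm{Ann}(a)$ is in general not a principal ideal, so the finite-index hypothesis does not apply to it directly. The workaround is to trap it between the principal ideal $(b)$ and $R$, so that finite index is inherited upward from $(b)$ to $\mathrm{Ann}(a)$. This is precisely why restricting the hypothesis to principal ideals is already enough to conclude the result.
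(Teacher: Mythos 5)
Your proof is correct and follows essentially the same route as the paper's: from $ab=0$ with $a,b\neq 0$ you deduce that $aR$ is finite because $bR$ has finite index and lies in the kernel of $x\mapsto ax$ (the paper phrases this via coset representatives of $bR$, you via the isomorphism $R/\mathrm{Ann}(a)\cong aR$), and then $[R:aR]<\infty$ forces $R$ to be finite, a contradiction. Your additional step (ii), checking that every non-trivial ideal inherits finite index from a principal subideal, is a worthwhile explicit verification that the paper leaves implicit.
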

\begin{proof}
	Suppose $a,b\in R$, with $ab=0$. Let $I=aR$ and $J=bR$. As $R=\cup_{i=1}^{k} \left( b_{i} +bR\right)$ for some $k\in \mathbb{N}$. Now consider the set $ A=\left\{b_i:i=1,2,\ldots,k\right\}$.  Now we claim that $aR$ is a subset of $\{ay:y\in A\}$ and so $I$ is finite. As $R/I$ is also finite, it will then follows that $R$ is finite. \\
	 \textbf{Proof of claim}: Let $ax\in I$. There exists $y$ in $A$ such that $x-y\in J$. So there exists $z\in R$  such that $x=y+bz$. Hence, $ ax=ay $ and so $ax$ is in $\{ay:y\in A\}$.
\end{proof}

It is easy to prove that any non-trivial principal ideal of a large Integral Domain  is $IP^{\star}$-set. And it is interesting and surprising result that our well familiar ring $\mathbb{Z}[x]$ is not large Integral Domain. If we  consider the ideal $x\mathbb{Z}[x]$ which is not $IP^{\star}$-set in $\mathbb{Z}[x]$. The following question comes naturally to our mind:

\begin{question}
	What is the class of Integral Domains where all  nontrivial principal ideals are  $IP^{\star}$-sets?
\end{question}

Before answering this question, we will prove two consecutive lemmas.

\begin{lemma}\label{finite index ip*}
Let $G$ be a group and $H$ be a finite index subgroup of $G$ i.e.,
$\left[G:H\right]=r$ for some $r\in\mathbb{N}$. Then $H$ is $IP_{r+1}^{\star}$-set in $G$.
\end{lemma}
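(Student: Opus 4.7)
The plan is a one-shot pigeonhole argument on partial sums, essentially the same trick that shows $2\mathbb{N}$ is an $IP_{2}^{\star}$-set in $\mathbb{N}$. Fix any $IP_{r+1}$-set $A \subseteq G$; by definition there exists a sequence $x_{1}, x_{2}, \ldots, x_{r+1}$ in $G$ with $FS(\{x_{n}\}_{n=1}^{r+1}) \subseteq A$, and the goal is to locate a member of this finite-sums set that also lies in $H$.

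To carry this out, I would form the $r+2$ partial sums $s_{0} = 0_{G}$ and $s_{k} = x_{1} + x_{2} + \cdots + x_{k}$ for $k = 1, 2, \ldots, r+1$. Since $[G:H] = r$, the group $G$ is the disjoint union of exactly $r$ cosets of $H$, so by the pigeonhole principle there must exist indices $0 \leq i < j \leq r+1$ for which $s_{i}$ and $s_{j}$ lie in the same coset. This forces $s_{j} - s_{i} \in H$. But $s_{j} - s_{i}$ equals the non-empty subset sum $x_{i+1} + x_{i+2} + \cdots + x_{j}$, which belongs to $FS(\{x_{n}\}_{n=1}^{r+1}) \subseteq A$. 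Hence $H \cap A \neq \emptyset$, and since $A$ was an arbitrary $IP_{r+1}$-set this shows that $H$ is an $IP_{r+1}^{\star}$-set.

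There is no serious obstacle here; the argument is pure pigeonhole, and the only point to keep in mind is that we are in the commutative-group setting implicit in the definition of $FS$ given in the introduction, so that rearranging the indices appearing in $s_{j}-s_{i}$ to match a subset sum is legitimate. One can note in passing that running the same argument with only the $r+1$ partial sums $s_{0}, s_{1}, \ldots, s_{r}$ would actually yield the stronger conclusion that $H$ is already an $IP_{r}^{\star}$-set, but the slightly wasteful version above is all that the lemma claims and all that is needed for its subsequent use.
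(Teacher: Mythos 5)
Your proof is correct and is essentially the paper's own argument: a pigeonhole on the partial sums over the $r$ cosets of $H$, with the difference of two partial sums landing in $H$ and equalling a consecutive block sum from $FS\left(\left\{x_n\right\}_{n=1}^{r+1}\right)$. Two small remarks: the lemma is stated for arbitrary (not necessarily abelian) groups, and no commutativity or rearrangement is actually needed if you use left cosets and form $s_i^{-1}s_j=x_{i+1}\cdots x_j$ as the paper does with $FP$; and your inclusion of $s_0=0_G$ is exactly why you correctly observe that the argument in fact yields the sharper conclusion that $H$ is $IP_r^{\star}$, whereas the paper omits $s_0$ and so only gets $IP_{r+1}^{\star}$.
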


\begin{proof}
As $H$ be a finite index subgroup of $G$ with $\left[G:H\right]=r$ . Then $G=\cup_{i=1}^{r}g_{i}H$
for $\left\{ g_{1},g_{2},\ldots,g_{r}\right\} \subset H$. Let $\left\{ x_{n}\right\} _{n=1}^{r+1}$
be a finite  sequence in $G$. We have to establish that $FP\left(\left\{ x_{n}\right\}_{n=1}^{r+1} \right)\cap H\ne\emptyset$.
From ``pigeon hole principle'' , there exists $i\in\left\{ 1,2,\ldots,r\right\} $
such that, at least two elements of $\left\{ x_{1},x_{1}x_{2},\ldots,x_{1}x_{2}\cdots x_{r+1}\right\} $
belong to $g_{i}H$. So, let $\prod_{i=1}^{k_{1}}x_{i}\in g_{i}H$
and $\prod_{i=1}^{k_{1}+k}x_{i}\in g_{i}H$ for some $k_{1},k\in\left\{ 1,2,\ldots,r+1\right\} $.
Then, $\prod_{i=1}^{k_{1}}x_{i}=g_{i}h_{1}$ and $\prod_{i=1}^{k_{1}+k}x_{i}=g_{i}h_{2}$
for some $h_{1},h_{2}\in H$. Now, $\left(\prod_{i=1}^{k_{1}}x_{i}\right)^{-1}\left(\prod_{i=1}^{k_{1}+k}x_{i}\right)=h_{1}^{-1}h_{2}\in H$.
Hence, we get $\prod_{i=1}^{k_{1}}x_{i+k_{1}}\in H$, which is desired.
\end{proof}
\begin{lemma}\label{infinite index non ip*}
Let $G$ be a group and $H$ be an infinite index subgroup of $G$
i.e., $\left[G:H\right]=\infty$. Then $H$ is not an $IP^{\star}$-set
in $G$.
\end{lemma}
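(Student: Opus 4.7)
The plan is to exhibit an IP-set in $G$ that misses $H$; by the definition of $IP^{\star}$ this immediately shows that $H$ is not an $IP^{\star}$-set. Concretely, I would build a sequence $\{x_n\}_{n=1}^{\infty}$ in $G$ inductively so that $FP(\{x_n\}_{n=1}^{\infty}) \cap H = \emptyset$, exploiting the fact that a finite union of left cosets of $H$ cannot exhaust $G$ when $[G:H] = \infty$.

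For the construction, suppose $x_{1}, \ldots, x_{n-1}$ have already been chosen so that every nonempty product $\prod_{i \in F} x_{i}$ with $\emptyset \neq F \subseteq \{1, \ldots, n-1\}$ lies in $G \setminus H$. Let
\[
P_{n-1} = \bigl\{\, \textstyle\prod_{i \in F} x_{i} : F \subseteq \{1, \ldots, n-1\} \,\bigr\},
\]
where the empty product is taken to be the identity $e$; this set has at most $2^{n-1}$ elements. I would then pick
\[
x_{n} \in G \setminus \bigcup_{w \in P_{n-1}} w^{-1} H,
\]
which is nonempty because the right-hand side is a finite union of left cosets of $H$, hence a proper subset of $G$ by the infinite-index hypothesis. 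Any new finite product introduced at this stage has the form $w \cdot x_{n}$ for some $w \in P_{n-1}$, and by the choice of $x_{n}$ we have $w x_{n} \notin H$, so the inductive invariant is preserved.

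To finish, observe that an arbitrary nonempty finite product $\prod_{i \in F} x_{i}$ with $F \in \mathcal{P}_{f}(\mathbb{N})$ can be written as $w \cdot x_{m}$ where $m = \max F$ and $w \in P_{m-1}$, so it was already verified to lie outside $H$ at stage $m$. Thus $FP(\{x_{n}\}_{n=1}^{\infty}) \subseteq G \setminus H$, producing the required IP-set disjoint from $H$. I do not anticipate any serious obstacle: this argument is the exact mirror of Lemma \ref{finite index ip*}, with the pigeonhole collision forced by finite index replaced by the freedom of selection granted by infinite index. The only mild care needed is to route each finite product through its maximum-index factor, so that the inductive invariant genuinely covers every element of $FP(\{x_{n}\}_{n=1}^{\infty})$.
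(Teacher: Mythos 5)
Your proposal is correct and is essentially the paper's own argument: a greedy inductive construction of $\{x_n\}$ avoiding the finitely many cosets determined by the previously formed products, which is possible precisely because $[G:H]=\infty$. In fact your version is slightly more careful than the paper's, which excludes the cosets $wH$ rather than $w^{-1}H$ for $w\in FP\left(\{x_k\}_{k=1}^{n-1}\right)$ — your choice of $x_n \notin \bigcup_{w} w^{-1}H$ is the one that actually guarantees $w x_n \notin H$, and your routing of each product through its maximum-index factor correctly closes the induction.
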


\begin{proof}
As $H$ is an infinite index subgroup of $G$ , we can find out a
sequence $\left\{ x_{n}\right\} _{n=1}^{\infty}$ in $G$, such that
$FP\left(\left\{ x_{n}\right\} \right)\cap H=\emptyset$, Where, \\

$$\begin{aligned}
	x_{1}  & \in  G\setminus H\\ 
	x_{2}  &\in  G\setminus H\cup x_{1}H\\
	x_{3}  &\in  G\setminus H\cup x_{1}H\cup x_{2}H\cup x_{1}x_{2}H\\
	\vdots
\end{aligned}$$

As, $\left[G:H\right]=\infty$,we can find out a sequence $\left\{ x_{n}\right\} _{n=1}^{\infty}$
in $G$, such that $FP\left(\left\{ x_{n}\right\} \right)\cap H=\emptyset$,
Where,

$$ x_{n}  \in  G\setminus H\cup\bigcup_{x\in FP\left(\left\{ x_{k}\right\} _{k=1}^{n-1}\right)}xH $$

for all $n\in\mathbb{N}$. Hence $H$ is not an $IP^{\star}$-set
in $G$.
\end{proof}
Combining Lemma \ref{finite index ip*}  and Lemma \ref{infinite index non ip*}  together we get the following theorem which is algebraically and combinatorially  very important and  nice result.
\begin{theorem}
Let $G$ be a group and $H$ be a subgroup of $G$. Then $H$ is an
$IP^{\star}$-set in $G$ if and only if $\left[G:H\right]<\infty$.
\end{theorem}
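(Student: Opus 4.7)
The plan is to deduce this biconditional essentially as an immediate packaging of the two preceding lemmas, together with the elementary observation that being $IP_{r}^{\star}$ for some finite $r$ implies being $IP^{\star}$.

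For the reverse implication, I would assume $[G:H]=r<\infty$ and invoke Lemma \ref{finite index ip*} to conclude that $H$ is $IP_{r+1}^{\star}$ in $G$. Since every $IP$-sequence $\{x_{n}\}_{n=1}^{\infty}$ restricts to a finite $IP$-sequence $\{x_{n}\}_{n=1}^{r+1}$, any $IP$-set automatically contains an $IP_{r+1}$-set, so $H$ must meet every $IP$-set. Thus $H$ is $IP^{\star}$.

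For the forward implication, I would argue by contrapositive: if $[G:H]=\infty$, then Lemma \ref{infinite index non ip*} produces a sequence $\{x_{n}\}_{n=1}^{\infty}$ with $FP(\{x_{n}\})\cap H=\emptyset$, exhibiting an $IP$-set disjoint from $H$ and thereby witnessing that $H$ fails to be $IP^{\star}$.

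The two lemmas have already done the combinatorial work (the pigeonhole argument on cosets, and the inductive greedy construction avoiding the cosets spanned by previous finite products), so there is no genuine obstacle remaining; the only thing to be careful about is the trivial bridge ``$IP_{r+1}^{\star}\Rightarrow IP^{\star}$,'' which I would state explicitly to keep the logical flow transparent. This gives a short, clean proof that is nothing more than the concatenation of the two lemmas.
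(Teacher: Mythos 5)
Your proposal is correct and matches the paper exactly: the paper derives this theorem by combining Lemma \ref{finite index ip*} and Lemma \ref{infinite index non ip*}, just as you do. Your explicit remark that $IP_{r+1}^{\star}$ implies $IP^{\star}$ is a small but worthwhile addition that the paper leaves implicit.
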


As a consequences of the above theorem, we get the following corollary,
which characterizes the family of all Integral Domain , where nontrivial
principal ideals are $IP^{\star}$-sets. And we get the answer of the question of this section. 
\begin{corollary}
Let $R$ be an Integral Domain. Every principal ideals of $R$ are
$IP^{\star}$-sets, if and only if $R$ is  large Integral Domain.
\end{corollary}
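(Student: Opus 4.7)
The plan is to reduce the corollary directly to the preceding theorem characterizing $IP^{\star}$-subgroups by finite index. Any principal ideal $aR$ of an integral domain $R$ is in particular an additive subgroup of $(R,+)$, so applying the theorem to $G=(R,+)$ and $H=aR$ translates the $IP^{\star}$-property into the purely algebraic statement $[R:aR]<\infty$. The whole corollary should then follow formally, with no further combinatorial input.

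For the implication $(\Leftarrow)$, I would assume $R$ is a large integral domain and take any nonzero $a\in R$. Since $aR$ is a nontrivial ideal, the largeness assumption gives $[R:aR]<\infty$, and then Lemma \ref{finite index ip*} (or the combined theorem) shows that $aR$ is an $IP^{\star}$-set in $(R,+)$. The trivial principal ideal $\{0\}=0\cdot R$ is excluded, matching the convention ``nontrivial'' used in the definition of large integral domain.

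For the implication $(\Rightarrow)$, I would assume every nonzero principal ideal is $IP^{\star}$ and show that every nontrivial ideal $I$ of $R$ has finite additive index. Choose a nonzero element $a\in I$; then $aR\subseteq I$, so the chain of additive subgroups gives $[R:I]\le [R:aR]$. By hypothesis $aR$ is $IP^{\star}$, and Lemma \ref{infinite index non ip*} forces $[R:aR]<\infty$, so $[R:I]$ is finite as required. This shows $R$ is a large integral domain.

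I do not expect a real obstacle here: once the subgroup/ideal distinction is handled (ideals are automatically additive subgroups, so both lemmas apply verbatim), the argument is a two-line sandwich of indices. The only mildly delicate point is making explicit that the condition on \emph{principal} ideals already forces finite index for \emph{all} nontrivial ideals, which is exactly the containment $aR\subseteq I$ for $0\neq a\in I$.
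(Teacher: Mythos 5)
Your proposal is correct and follows exactly the route the paper intends: the corollary is stated there as an immediate consequence of the theorem characterizing $IP^{\star}$-subgroups by finite index, with no written proof. Your argument supplies the one detail the paper leaves implicit, namely that finiteness of $[R:aR]$ for all nonzero $a$ forces finiteness of $[R:I]$ for every nontrivial ideal $I$ via the containment $aR\subseteq I$.
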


\section{Difference of $J$-sets and $CR$-sets}

\begin{definition}
	\textbf{(Følner sequence)} Let $S$ be a countable semigroup. A Følner sequence in $S$ is a sequence $\left\{F_{n}\right\}_{n=1}^{\infty}$ where each $F_{n}$ is a finite subset of $S$ and such that for any $s\in S$  the set  $sF_{n}:=\left\{sx:x\in F_{n}\right\}$ satisfies $$\lim_{n \rightarrow \infty} \dfrac{|gF_{n}\cap F_{n}|}{|F_{n}|}=1.$$
\end{definition}

\begin{definition}
    \textbf{(Upper Banach density)}	Let $S$ be a countable semigroup admititing a Følner sequence $\left\{F_{n}\right\}_{n=1}^{\infty}$.  Then for any subset $A\subset S$ we definie its upper density ( with respect to $\left\{F_{n}\right\}_{n=1}^{\infty}$) by  $$ \overline{d}_{\left\{ F_{n}\right\} }\left(A\right)=\limsup_{n \rightarrow \infty} \dfrac{|A\cap F_{n}|}{|F_{n}|}.$$ and upper Banach density is defined by $$ d^{\star}\left(A\right)=\sup\left\{ \overline{d}_{\left\{ F_{n}\right\} }\left(A\right):\left\{ F_{n}\right\} _{n=1}^{\infty} \text{ is a Følner sequence }\right\}. $$
\end{definition}
Here are some properties of density that we will use later in this article:
Let $S$ be countable semigroup  admitting a Følner sequence  and $A$ abd $B$ are subsets of $S$. Let $s\in S$, define $s^{-1}A=\left\{t\in S:st\in A\right\}$. Then
\begin{itemize}
	\item $d^{\star}\left(S\right)=1,$
	\item $d^{\star}\left(A\right)=d^{\star}\left(sA\right)=d^{\star}\left(s^{-1}A\right)\, \forall s\in S$ and
	\item $d^{\star}\left(A\cup B\right)=d^{\star}\left(A\right)+d^{\star}\left(B\right)$ if $A\cap B=\emptyset$.
\end{itemize}
It is well known result that if $A$ ia a positive upper Banach density set of $S$, then $A^{-1}A$ is an $IP^{\star}$-set.  For commutative semigroup $S$, if $d^{\star}\left(S\right)>0$ then $A-A$ is an $IP^{\star}$-set. A combinatorially rich subset of $\mathbb{N}$ is $\mathbb{P}$, set of prime numbers, which has a fundamental interest in all branch of number theory. The density of  $\mathbb{P}$ is zeo, but there is a surprising result is that $\mathbb{P}-\mathbb{P}$  is an $IP^{\star}$-set by \cite{G} and \cite{HuS}. Also, the  stronger result $\mathbb{P}-\mathbb{P}$  is an $IP_{r}^{\star}$-set for some $r\in\mathbb{N}$. Now, we concentrate on some combinatorially rich sets which have some properties like set of prime numbers $\mathbb{P}$. To do so,  we state the following definition.

\begin{definition}\label{J set}
Let $\left(S,+\right)$ be a commutative semigroup and let $A\subseteq S$.
A is $J$-set if and only if for every $F\in\mathcal{P}_{f}\left({}^{\mathbb{N}}{S}\right)$,
there exist $a\in S$ and $H\in\mathcal{P}_{f}\left(\mathbb{N}\right)$
such that for each $f\in F$, $$a+\sum_{n\in H}f(n)\in A.$$
\end{definition}

The set of prime numbers,  $\mathbb{P}$ contains arithmetic progression of arbitrary length and also $J$-sets in $\mathbb{N}$ and there exists a $J$-set in $\mathbb{N}$ with upper Banach density zero. From  this and above discussion, two question arise naturally.

\begin{question}\label{P and J} Let  $A$ be $J$ set in $\mathbb{N}$.	
	\begin{itemize}
		\item[(a)] Is the set of prime numbers $\mathbb{P}$ is a $J$-set in $\mathbb{N}$?
		\item[(b)] Is the set $A-A$ is an $IP_{r}^{\star}$-set for some $r\in\mathbb{N}$?
		\item[(c)] Does the sum of reciprocals of $A$ diverse, i.e.,$\sum_{n\in A}\frac{1}{n}=\infty$?
	\end{itemize}
\end{question}
 
 We strongly believe that the answer of  Question \ref{P and J} (a) and (c) are  very difficult and we present a partial answer of the Question \ref{P and J} (b).
 
\begin{theorem}\label{J-J}
	Let $A$ be a $J$-set of a commutative semigroup $S$. Then show
	that $A-A$ is an $IP^{\star}$-set in $S$.
\end{theorem}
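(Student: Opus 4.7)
The plan is to derive the $IP^\star$ property of $A-A$ directly from the $J$-set condition on $A$, by applying Definition~\ref{J set} to a two-function family $F = \{f_1, f_2\}$ engineered so that the \emph{difference} of the two elements of $A$ that the definition produces is automatically a finite sum of an arbitrarily prescribed sequence.

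Concretely, I would fix an arbitrary sequence $\{x_n\}_{n=1}^\infty$ in $S$; the task is to exhibit some finite sum $\sum_{n \in H} x_n$ lying in $A-A$, which will prove $FS(\{x_n\}_{n=1}^\infty) \cap (A-A) \neq \emptyset$. The construction I would use is
\[
f_{1}(n) = x_{2n-1} + x_{2n}, \qquad f_{2}(n) = x_{2n-1},
\]
both viewed as elements of ${}^{\mathbb{N}}S$. The point of this pairing is that for \emph{every} finite $H \subset \mathbb{N}$,
\[
\Bigl(\sum_{n \in H} f_{1}(n)\Bigr) - \Bigl(\sum_{n \in H} f_{2}(n)\Bigr) = \sum_{n \in H} x_{2n} \in FS\left(\left\{ x_{n}\right\}_{n=1}^{\infty}\right),
\]
so whatever $H$ the $J$-set property hands us, the resulting difference is guaranteed to live in the target $IP$-set.

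Applying the $J$-set hypothesis to $F = \{f_1, f_2\}$ yields $a \in S$ and a finite $H \subset \mathbb{N}$ with $a + \sum_{n \in H} f_{1}(n) \in A$ and $a + \sum_{n \in H} f_{2}(n) \in A$; subtracting these two elements of $A$ places $\sum_{n \in H} x_{2n}$ inside $A-A$. Hence $FS(\{x_n\}_{n=1}^\infty)$ meets $A-A$, and since the sequence was arbitrary, $A-A$ is $IP^\star$. I do not foresee a serious obstacle; the only cosmetic point is to interpret $A-A$ in a commutative semigroup (the argument uses only that the two elements of $A$ produced by the $J$-set property have a well-defined difference, which is immediate when $S$ embeds in a group, as in all applications in the paper). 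The cleanness of the proof is because the $J$-set axiom is tailor-made to force values of different functions to align at a common parameter, which is exactly what is needed to manufacture a pair $(a+\text{stuff},\, a+\text{other stuff})$ of elements of $A$ whose difference we control.
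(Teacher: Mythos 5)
Your proof is correct and is essentially the same argument as the paper's: both apply the $J$-set definition to a pair of functions whose termwise difference encodes the prescribed sequence, then subtract the two resulting elements of $A$. The only difference is cosmetic --- you take the auxiliary summands from the odd-indexed subsequence $x_{2n-1}$, whereas the paper introduces an arbitrary auxiliary sequence $\{y_n\}$ and uses $f(n)=y_n$, $g(n)=x_n+y_n$.
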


\begin{proof}
	Let $\left\{ x_{n}\right\} _{n=1}^{\infty}$ be a sequence in $S$
	and also take two sequences $\left\{ y_{n}\right\} _{n=1}^{\infty}$
	and $\left\{ x_{n}+y_{n}\right\} _{n=1}^{\infty}$. Let two function
	$f,g:\mathbb{N}\rightarrow S$, defined by $f\left(n\right)=y_{n}$
	and $g\left(n\right)=x_{n}+y_{n}$ for all $n\in\mathbb{N}$. Then
	there exist $a\in S$ and $H\subset\mathcal{P}_{f}\left(\mathbb{N}\right)$,
	such that $a+\sum_{n\in H}x_{n}\in A$ and $a+\sum_{n\in H}x_{n}+y_{n}\in A$.
	So, we get $\sum_{n\in H}x_{n}\in A-A$. Hence $A-A$ is an $IP^{\star}$-set.
\end{proof}
 
In \cite{BG}, V. Bergelson and D. Glasscock introduced a new notion of
large sets in commutative semigroup $\left(S,+\right)$. They used
matrix notation. Given a $r\times k$ matrix $M$ we denote by $m_{i,j}$
the element in row $i$ and column $j$ of $M$.
\begin{definition}\label{CR set}
Let $\left(S,+\right)$ be a commutative semigroup and let $A\subseteq S$.
Then $A$ is combinatorially rich set (denoted $CR$-set) if and only
if for each $k\in\mathbb{N}$, there exists $r\in\mathbb{N}$ such
that whenever $M$ is an $r\times k$ matrix with entries from $S$,
there exist $a\in S$ and nonempty $H\subseteq\left\{ 1,2,\ldots,r\right\} $
such that for each $j\in\left\{ 1,2,\ldots,k\right\} $, $$a+\sum_{t\in H}m_{t,j}\in A.$$
\end{definition}

In \cite{HHST},  authors established an equivalent definition of $CR$-set,
which is compatible with the definition of $J$-set.
\begin{definition}\label{K-CR set}
Let $\left(S,+\right)$ be a commutative semigroup, let $k\in\mathbb{N}$,
and let $A\subseteq S$. Then A is a $k-CR$-set if and only if there
exists $r\in\mathbb{N}$ such that whenever $F\in\mathcal{P}_{f}\left({}^{\mathbb{N}}{S}\right)$,
with $|F|\le k$, there exist $a\in S$ and $H\in\mathcal{P}_{f}\left(\left\{ 1,2,\ldots,r\right\} \right)$
such that for all $f\in F$, $$a+\sum_{t\in H}f(t)\in A.$$
\end{definition}

Note that a set is a $CR$-set if and only if for each $k\in\mathbb{N}$,
it is a $k-CR$-set.

Using the technique of the Theorem \ref{J-J}, we get
\begin{theorem}
Let $A$ be a $2-CR$-set of a commutative semigroup $S$. Then show
that $A-A$ is an $IP_{r}^{\star}$-set in $S$ for some $r\in\mathbb{N}$.
\end{theorem}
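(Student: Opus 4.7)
The plan is to imitate the proof of Theorem \ref{J-J} essentially verbatim, but use the $2$-$CR$ hypothesis in place of the $J$-set hypothesis. The key observation is that Theorem \ref{J-J}'s argument only ever uses the $J$-set property on a family of two test-sequences, so it goes through as soon as the $2$-$CR$ condition is available; in return we pay by restricting $H$ to lie inside a bounded window $\{1,2,\ldots,r\}$, which is precisely the reason the conclusion strengthens from $IP^{\star}$ to $IP_{r}^{\star}$. The $r$ in the conclusion will be exactly the $r$ guaranteed by Definition \ref{K-CR set} applied with $k=2$.

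Concretely, I would first let $r\in\mathbb{N}$ be the witness provided by the $2$-$CR$ hypothesis for $A$, and then take an arbitrary finite sequence $\{x_{n}\}_{n=1}^{r}$ in $S$; the aim is to exhibit a nonempty $H\subseteq\{1,2,\ldots,r\}$ with $\sum_{n\in H}x_{n}\in A-A$. Pick any auxiliary sequence $\{y_{n}\}_{n=1}^{r}$ in $S$, extend both $\{x_{n}\}$ and $\{y_{n}\}$ arbitrarily to infinite sequences, and define $f,g:\mathbb{N}\to S$ by $f(n)=y_{n}$ and $g(n)=x_{n}+y_{n}$. Applying Definition \ref{K-CR set} to $F=\{f,g\}$ produces $a\in S$ and a nonempty $H\subseteq\{1,2,\ldots,r\}$ with
$$a+\sum_{n\in H}y_{n}\in A \quad\text{and}\quad a+\sum_{n\in H}(x_{n}+y_{n})\in A,$$
and subtracting these two elements of $A$ yields $\sum_{n\in H}x_{n}\in A-A$, which is the required $FS$-sum.

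The only real obstacle, and it is minor, is the bookkeeping in the degenerate case $f=g$, since then $|F|=1$ and one cannot literally read off two distinct elements of $A$ from the hypothesis. But $f=g$ forces $x_{n}+y_{n}=y_{n}$ for $n\leq r$, which can only happen when $S$ has an identity and $x_{n}=0$ for every $n\leq r$; in that event every nonempty $FS$-sum equals $0$, which trivially lies in $A-A$ since $A$ is nonempty (every $2$-$CR$ set is nonempty). In all other cases $F$ is genuinely a two-element family, so $|F|=2\leq k$ and the $2$-$CR$ hypothesis applies directly, finishing the proof.
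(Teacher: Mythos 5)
Your proposal is correct and is exactly what the paper intends: the paper gives no written proof beyond the remark ``Using the technique of the Theorem \ref{J-J}'', and your argument is precisely that technique, with the $2$-$CR$ witness $r$ supplying the bound that upgrades $IP^{\star}$ to $IP_{r}^{\star}$. The degenerate case $f=g$ you worry about is already covered by the hypothesis $|F|\le k$ in Definition \ref{K-CR set}, so that extra paragraph is unnecessary but harmless.
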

\begin{corollary}\label{CR-CR}
Let $A$ be a $CR$-set of a commutative semigroup $S$. Then 
 $A-A$ is an $IP_{r}^{\star}$-set in $S$ for some $r\in\mathbb{N}$.	
\end{corollary}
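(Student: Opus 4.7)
The plan is to obtain Corollary \ref{CR-CR} as an immediate specialization of the preceding theorem on $2$-$CR$-sets. The bridge is the equivalence recorded just after Definition \ref{K-CR set}: a subset $A\subseteq S$ is a $CR$-set if and only if it is a $k$-$CR$-set for every $k\in\mathbb{N}$. Applying this with $k=2$ shows that any $CR$-set satisfies the hypothesis of the preceding theorem, and the conclusion is verbatim the one we want. So the corollary reduces to a one-line invocation, and no further work is needed beyond the theorem itself.

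For transparency I would still sketch the proof of the preceding theorem, since that is where the actual content sits; the template is lifted almost unchanged from the proof of Theorem \ref{J-J}. Given an arbitrary sequence $\{x_n\}_{n=1}^{\infty}$ in $S$, pick any auxiliary sequence $\{y_n\}_{n=1}^{\infty}$ and define $f,g\in {}^{\mathbb{N}}S$ by $f(n)=y_n$ and $g(n)=x_n+y_n$. Feeding $F=\{f,g\}$ (note $|F|\leq 2$) into the $2$-$CR$-property of $A$ produces an $r\in\mathbb{N}$ depending only on $A$, an element $a\in S$, and a nonempty $H\subseteq\{1,\ldots,r\}$ with $a+\sum_{n\in H}y_n\in A$ and $a+\sum_{n\in H}(x_n+y_n)\in A$. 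Subtracting gives $\sum_{n\in H}x_n\in A-A$, and since $H\subseteq\{1,\ldots,r\}$ this sum belongs to $FS(\{x_n\}_{n=1}^{r})$. Hence $FS(\{x_n\}_{n=1}^{r})\cap(A-A)\neq\emptyset$, which is exactly what it means for $A-A$ to be $IP_r^{\star}$.

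No substantive obstacle arises. The one point worth flagging is that the $r$ delivered by the $2$-$CR$-property must depend only on $A$, not on the test sequence $\{x_n\}$, since the $IP_r^{\star}$ conclusion quantifies $r$ before the sequence. This is already baked into Definition \ref{K-CR set} (the $r$ is quantified before the family $F$), so it comes for free. The only other choice to justify, namely the freedom to pick the auxiliary sequence $\{y_n\}$ to make $f$ and $g$ genuine elements of ${}^{\mathbb{N}}S$, is harmless.
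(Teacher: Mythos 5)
Your proposal is correct and matches the paper's own argument: the paper proves the corollary in one line by noting that a $CR$-set is a $2$-$CR$-set and invoking the preceding theorem, whose proof (left to the reader as ``the technique of Theorem \ref{J-J}'') is exactly the two-function argument you sketch. Nothing is missing.
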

\begin{proof}
	Follows from the fact that  $CR$-set is $2-CR$-set.
\end{proof}
 Using the above corollary with Goswami's Theorem \ref{Goswami's theorem}, we get the following corollary.
\begin{corollary}
	Let $A$ be a $CR$-set in  $\mathbb{N}$. Then for some $k\in \mathbb{N}$, $$ k\mathbb{N}\subseteq  \left(A-A\right)\left(A-A\right).$$ 
\end{corollary}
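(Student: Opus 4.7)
The plan is to deduce the statement by direct application of Goswami's theorem (Theorem \ref{Goswami's theorem}) to $A-A$ in both of its two argument slots; the only thing that needs checking is that the same set $A-A$ satisfies both of the hypotheses required.

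First I would observe that since $A$ is a $CR$-set in $\mathbb{N}$, Corollary \ref{CR-CR} immediately gives an $r\in\mathbb{N}$ such that $A-A$ is an $IP_{r}^{\star}$-set in $\mathbb{N}$. Second, I would argue that every $CR$-set is in particular a $J$-set: unpacking Definition \ref{K-CR set}, being a $CR$-set means being a $k$-$CR$-set for every $k\in\mathbb{N}$, and choosing for a prescribed $F\in\mathcal{P}_{f}({}^{\mathbb{N}}S)$ the parameter $k=|F|$ produces exactly the $a\in S$ and the nonempty $H$ required by Definition \ref{J set}. Hence $A$ is a $J$-set, and Theorem \ref{J-J} supplies that $A-A$ is an $IP^{\star}$-set in $\mathbb{N}$.

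Having established that $A-A$ is simultaneously $IP^{\star}$ and $IP_{r}^{\star}$, I would apply Theorem \ref{Goswami's theorem} with the choices $A:=A-A$ (playing the role of the $IP^{\star}$-set) and $B:=A-A$ (playing the role of the $IP_{r}^{\star}$-set). The theorem then produces $k\in A-A$ such that
\begin{equation*}
k\cdot \mathbb{N}\subseteq (A-A)\cdot(A-A),
\end{equation*}
which is exactly the desired conclusion.

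The proof is almost entirely bookkeeping; the one conceptual check is the implication ``$CR$-set $\Rightarrow$ $J$-set.'' That step is the only place where one must be careful, since Definition \ref{K-CR set} quantifies $H$ inside $\{1,2,\ldots,r\}$ while Definition \ref{J set} allows $H$ to be an arbitrary finite subset of $\mathbb{N}$; however, the former is strictly stronger, so the implication is actually immediate. There is no genuine obstacle here --- the real mathematical work is hidden in Theorem \ref{Goswami's theorem} and in Corollary \ref{CR-CR}, both of which we are permitted to cite.
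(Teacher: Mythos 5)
Your proof is correct and follows essentially the same route as the paper: Corollary \ref{CR-CR} gives that $A-A$ is an $IP_{r}^{\star}$-set, and Goswami's Theorem \ref{Goswami's theorem} is then applied with $A-A$ occupying both the $IP^{\star}$ slot and the $IP_{r}^{\star}$ slot. The only (harmless) detour is your derivation of the $IP^{\star}$ property of $A-A$ via the implication ``$CR$-set $\Rightarrow$ $J$-set'' and Theorem \ref{J-J}; this follows more directly from the observation that every $IP_{r}^{\star}$-set is automatically an $IP^{\star}$-set, since every $IP$-set contains an $IP_{r}$-set.
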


A question naturally arises as to whether the above results are true for non-commutative semigroup. For this curiosity, we first  need to state definition of $J$-set in arbitrary semigroups.

\begin{definition}\label{Non Com J set}
	Let $(S,\cdot)$ be an arbitrary semigroup. A set $A\subseteq S$ is said to be a  $J$-set if and only
	if for each $F\in\mathcal{P}_{f}\left({}^{\mathbb{N}}{S}\right)$  there exists $H\in {\mathcal{P}}_{f}(\mathbb{N})$ such that $H=\{t(1),t(2),\ldots,t(m)\}$ with $t(1)<t(2)<\ldots<t(m)$  for some $m\in \mathbb{N}$ such that
	for each $f\in F$, $$a(1)\cdot f\big(t(1)\big)\cdot a(2)\cdot f\big(t(2)\big)\cdot a(3)\cdots a(m)\cdot f\big(t(m)\big)\cdot a(m+1)\in A.$$
\end{definition}
Let $A$ be a $J$-set in a non-commutative semigroup. Intuitively, it is clear that for a non-commutative semigroup $S$, $A^{-1}A$ may not be an $IP^{\star}$-set.\\

\textbf{Example}: Let $F$ be a free semigroup with two generators $a$ and $b$. If we take $A=Fa$, then $A$ is a $J$ set but  $A^{-1}A$ is not an $IP^{\star}$-set in $F$, as  $A^{-1}A$ does not intersect with the $IP$-set, generated by $\{b^{n}\}_{n=1}^{\infty}$.


\section{Product of $IP_{r}^{\star}$-sets}

In \cite[Corollary 2.3]{G}, S. Goswami proved that, for the set of prime numbers $\mathbb{P}$ in $\mathbb{N}$, there exists $k\in \mathbb{N}$, such that $k\mathbb{N}\subseteq (\mathbb{P}-\mathbb{P})(\mathbb{P}-\mathbb{P})$ which is a very surprising and fundamental result in prime numbers. And this result coming from the following theorem:

\begin{theorem}\label{Goswami's theorem}(Goswami's theorem \cite[Theorem 2.1]{G}):
	Let $r\in\mathbb{N}$ and let $A,B\subseteq\mathbb{N}$
	be $IP^{\star}$ set and $IP_{r}^{\star}$ sets respectively. Then
	there exists $k\in A$ such that $k\cdot\mathbb{N}\subseteq A\cdot B.$ 
\end{theorem} 

Some questions naturally come up.
\begin{question}\label{question on product IP*}
	Let $A$ be an $IP^{\star}$-set and $B$ be an $IP_{r}^{\star}$-set in $\mathbb{N}$ for some $r\in\mathbb{N}$.
	\begin{itemize}
		\item[(a)] Does the  product of two $IP^{\star}$-set contain $k\mathbb{N}$ for some $k\in \mathbb{N}$?
		\item[(b)] Can we get some extra result in the product $BC$, where $C$ is an $IP_{s}^{\star}$-set in $\mathbb{N}$ for some $s\in\mathbb{N}$?
		\item[(c)] Can we extend the Theorem \ref{Goswami's theorem} for large Integral Domains ?
	\end{itemize}
\end{question}
In this section, we present an affirmative answers of the Question \ref{question on product IP*} (c).  Before discussing the upcoming results of this section, we want to clear that $\{0\}$ is not  $IP^{\star}$ set in $\left(R,+\right)$. If  $\{0\}$ is  $IP^{\star}$ set in $\left(R,+\right)$, then being large Integral Domain, $R$ is finite.
\begin{lemma}
Let $R$ be a large Integral Domain and $A$ be an   $IP^{\star}$-set
in $R$. Then for any $r\in R\setminus\left\{ 0\right\} $, $rA$
is an   $IP^{\star}$-set in $R$.
\end{lemma}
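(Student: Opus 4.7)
The plan is to verify the $IP^{\star}$ defining property directly: given an arbitrary sequence $\{x_n\}_{n=1}^{\infty}$ in $R$, I must produce a nonempty finite $H \subseteq \mathbb{N}$ with $\sum_{n\in H} x_n \in rA$. The starting observation is that, because $R$ is a large Integral Domain and $r \neq 0$, the principal ideal $rR$ has finite index in $R$, say $n := [R : rR]$. The additive analogue of Lemma \ref{finite index ip*}, applied to the abelian group $(R,+)$ and its subgroup $rR$, then guarantees that $rR$ is an $IP_{n+1}^{\star}$-set in $R$.

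Next, I would iteratively exploit this $IP_{n+1}^{\star}$ property to extract a sub-sum sequence $\{y_k\}_{k=1}^{\infty}$ whose every term lies in $rR$. Concretely, applying the property to $\{x_1,\ldots,x_{n+1}\}$ produces a nonempty $H_1 \subseteq \{1,\ldots,n+1\}$ with $y_1 := \sum_{i\in H_1} x_i \in rR$; setting $m_1 = \max H_1$ and applying the same property to $\{x_{m_1+1},\ldots,x_{m_1+n+1}\}$ gives $y_2 \in rR$ with support $H_2$ strictly to the right of $H_1$; iterating yields a sequence with pairwise disjoint finite supports $H_1,H_2,\ldots$, so $FS(\{y_k\}) \subseteq FS(\{x_n\})$ and each $y_k \in rR$.

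The integral-domain hypothesis now enters decisively: since $r$ is a nonzero element of an integral domain, it is a non-zero-divisor, and each $y_k \in rR$ has a unique preimage $z_k \in R$ with $y_k = r z_k$. Because $A$ is an $IP^{\star}$-set, applied to $\{z_k\}_{k=1}^{\infty}$ it meets $FS(\{z_k\})$; picking a nonempty finite $K$ with $\sum_{k\in K} z_k \in A$ and multiplying through by $r$ yields $\sum_{k\in K} y_k = r\sum_{k\in K} z_k \in rA \cap FS(\{x_n\})$, as required.

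The only mildly delicate step is the sub-sum extraction, which is a routine pigeonhole iteration once $rR$ is known to be $IP_{n+1}^{\star}$; the role of the integral-domain hypothesis is precisely to make the ``division by $r$'' step well-defined, so that an $IP$-set of $z_k$'s is genuinely produced from the $y_k$'s. I anticipate no substantive obstacle beyond these bookkeeping verifications.
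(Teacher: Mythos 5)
Your proposal is correct and follows essentially the same route as the paper: extract successive sub-sums $y_k\in rR$ with disjoint supports via the finite-index pigeonhole (the paper redoes this argument inline rather than citing Lemma \ref{finite index ip*}, but it is the same computation), write $y_k=rz_k$, and apply the $IP^{\star}$ property of $A$ to the sequence $\{z_k\}$. The only cosmetic difference is that the paper does not stress uniqueness of the preimage $z_k$, which is indeed not needed for the argument to go through.
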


\begin{proof}
Let be $\left\{ x_{n}\right\} _{n=1}^{\infty}$ be a sequences in
$G$. As $\langle r\rangle$ is finite index, for some $k\in \mathbb{N}$,  $$G=\cup_{n=1}^{k}r_{i}+\langle r\rangle.$$
Atleast two elements of $\left\{ x_{1},x_{1}+x_{2},\ldots,x_{1}+x_{2}+\cdots+x_{k+1}\right\} $,
belong to $r_{i}+\langle r\rangle$, for some $i\in\left\{ 1,2,\ldots,k\right\} $.
So, there exists $H_{1}\subset\mathcal{P}_{f}\left(\mathbb{N}\right)$,
such that $$y_{1}=\sum x_{i}\in\langle r\rangle.$$ Now if we, consider
a new sequence $\left\{ x_{n}\right\} _{n=\max H_{1}+1}^{\infty}$,
we get $H_{2}\subset\mathcal{P}_{f}\left(\mathbb{N}\right)$, such
that $$y_{2}=\sum_{i\in H_{2}}x_{i}\in\langle r\rangle \text{ with } \max H_{1}<\min H_{2}.$$
In this process, we can construct a sequence $\left\{ y_{n}\right\} _{n=1}^{\infty}$
in $R$, where all $y_{n}$ are belong to $\langle r\rangle$ and $$y_{n}=\sum_{i\in H_{n}}x_{i}
\text{ with } \max H_{n}<\min H_{n+1}.$$ As $y_{n}\in\langle r\rangle$ for all
$n\in\mathbb{N}$, we can take $y_{n}=rz_{n}$ for some $z_{n}\in R$.
Now, as $A$ is an $IP^{\star}$-set, there exists a set $K\in\mathcal{P}_{f}\left(\mathbb{N}\right)$,
such that $\sum_{n\in K}z_{n}\in A$, which implies $$\sum_{n\in K}y_{n}=\sum_{n\in K}rz_{n}\in rA.$$
If we take $H=\cup_{n\in K}H_{n}$, then $\sum_{i\in H}x_{i}\in rA$. Hence $rA$ is an  $IP^{\star}$-set in $R$.
\end{proof}

\begin{theorem}
 Let $s\in\mathbb{N}$. Let $R$ be a large Integral Domain and $A$ be an $IP^{\star}$-set and $B$ be an $IP_{s}^{\star}$-set in $R$. Then there exists an $IP^{\star}$-set
$D$, such that $\langle r\rangle\setminus\left\{ 0\right\} \in AB$ for all $r\in D$.
\end{theorem}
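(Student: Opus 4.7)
The plan is to set $D := \bigcap_{k=1}^{s} kA$, where each positive integer $k$ is identified with $k \cdot 1_R \in R$, and to verify that this $D$ fulfils both the $IP^{\star}$ property and the inclusion $\langle r \rangle \setminus \{0\} \subseteq AB$ required by the statement (interpreting the ``$\in$'' in the statement as the evidently intended ``$\subseteq$'').

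For the $IP^{\star}$ claim, the preceding lemma gives that each dilate $kA$ with $k \cdot 1_R \neq 0$ is $IP^{\star}$ in $R$. A finite intersection of $IP^{\star}$-sets is again $IP^{\star}$: invoking the algebra of $\beta R$ recalled in the introduction, a subset of $R$ is $IP^{\star}$ exactly when it belongs to every additive idempotent of $\beta R$, a property clearly closed under finite intersections. Hence, as long as every $k \cdot 1_R$ with $1 \leq k \leq s$ is nonzero in $R$---which holds in characteristic $0$ or characteristic exceeding $s$---the set $D$ is $IP^{\star}$. For the inclusion, fix $r \in D$ with $r \neq 0$ (the case $r = 0$ being vacuous since $\langle 0 \rangle \setminus \{0\} = \emptyset$), and pick $m \in R$ with $rm \neq 0$, which in an integral domain forces $m \neq 0$. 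Apply the $IP_s^{\star}$ hypothesis on $B$ to the constant sequence $x_n = m$ ($n = 1, \ldots, s$): the set $FS(\{x_n\}_{n=1}^{s}) = \{m, 2m, \ldots, sm\}$ must meet $B$, yielding some $k \in \{1, \ldots, s\}$ with $km \in B$. Because $r \in kA$, write $r = ka$ with $a \in A$; then $rm = a(km) \in AB$, as required.

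The main obstacle I foresee is the behaviour in small positive characteristic: if $\mathrm{char}(R) = p \leq s$, then $pA = \{0\}$ fails to be $IP^{\star}$, and the constant seed $(m, m, \ldots, m)$ may force $B$ to meet $FS$ only at $0$. Both deficiencies should be patchable by restricting the intersection defining $D$ to those $k \in \{1, \ldots, s\}$ with $k \cdot 1_R \neq 0$ and by replacing the constant seed with a more carefully spread $s$-tuple (for instance a geometric one like $(m, 2m, 4m, \ldots, 2^{s-1}m)$) when invoking $IP_s^{\star}$ on $B$; but this refinement is the only genuine subtlety beyond the clean argument above.
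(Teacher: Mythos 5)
Your argument is structurally the same as the paper's: intersect dilates of $A$ over the finite sums of a fixed $s$-term seed, then feed the $x$-dilated seed to the $IP_s^{\star}$ property of $B$. The one genuine gap is your choice of seed $(1_R,\ldots,1_R)$, and it is exactly the issue you flag at the end: large integral domains of positive characteristic exist (e.g.\ $\mathbb{F}_p[t]$ or $\mathbb{F}_p[[t]]$, where every nonzero ideal has finite index a power of $p$), and there your construction breaks because some $k\cdot 1_R$ vanishes, making $kA=\{0\}$ non-$IP^{\star}$ and letting $FS(\{m,\ldots,m\})$ meet $B$ possibly only at $0$. The patch you sketch does not close the gap: the geometric seed $(m,2m,4m,\ldots,2^{s-1}m)$ has nonempty subset sums $cm$ with $c$ ranging over all of $1,\ldots,2^s-1$, so any prime $p\le 2^s-1$ kills one of them (already $s=2$, $p=3$ gives $m+2m=0$); and restricting the intersection defining $D$ to the nonzero multiples does not help on its own, because $B$ may then meet the seed's finite sums only at a sum that is zero, leaving you with no factorization $r=ka$ to exploit.

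The paper closes exactly this hole before the proof starts: since $\{0\}$ is not $IP^{\star}$ in $(R,+)$ (a subgroup is $IP^{\star}$ iff it has finite index, and $R$ is infinite), there exist $b_1,\ldots,b_s$ with $FS\left(\{b_i\}_{i=1}^s\right)\cap\{0\}=\emptyset$ --- equivalently, one may choose the $b_i$ greedily so that all $2^s-1$ nonempty subset sums are nonzero, which needs only that $R$ is infinite. Taking $D=A\cap\bigcap_{y\in FS\left(\{b_i\}_{i=1}^s\right)}yA$ and running your argument with the seed $(xb_1,\ldots,xb_s)$ in place of $(m,\ldots,m)$ (integrality of $R$ guarantees the dilated sums are still nonzero) yields the theorem in all characteristics. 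In characteristic $0$, or characteristic exceeding $s$, your version with $b_i=1_R$ is a correct special case of this; the rest of your reasoning, including the finite-intersection stability of $IP^{\star}$ via idempotent ultrafilters and the final factorization $rm=a(km)$, is sound.
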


\begin{proof}
As, $\left\{ 0\right\} $ is not an $IP^{\star}$-set in $R$, then
there exist $b_{i}\in R$ for $i=1,2,\ldots,s$, such that $FS\left(\left\{ b_{i}\right\} _{i=1}^{s}\right)\cap\left\{ 0\right\} =\emptyset$.
Let $$D=A\cap\bigcap_{y\in FS\left(\left\{ b_{i}\right\} _{i=1}^{s}\right)}yA,$$
which is an $IP^{\star}$-set in $R$. Let $x\in R\setminus\left\{ 0\right\} $,
$R$ being an Integral Domain, $FS\left(\left\{ xb_{i}\right\} _{i=1}^{s}\right)\cap\left\{ 0\right\} =\emptyset$.
As, $B$ be an $IP_{s}^{\star}$-set, $\sum_{i\in H(x)}xb_{i}\in B$
for some $H(x)\subseteq\left\{ 1,2,\ldots,s\right\} $. Let $r\in D$,
then $$r\in\left(\sum_{i\in H(x)}b_{i}\right)A\implies rx\in\left(\sum_{i\in H(x)}xb_{i}\right)A\subset AB,$$
which completes the proof.
\end{proof}
 
From the above theorem with Corollary \ref{CR-CR}, we get the following corollary:

\begin{corollary}
	Let $A$ be a $CR$-set in the large Integral Domain $R$. Then for some $r\in R\setminus {0}$, $$ rR\subseteq \left(A-A\right)\left(A-A\right).$$ 
\end{corollary}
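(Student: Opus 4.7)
The plan is to reduce this corollary to a direct application of the preceding theorem (the large Integral Domain version of Goswami's theorem), taking both factors in the product to be $A-A$. The only preparatory work is verifying that $A-A$ is simultaneously an $IP^{\star}$-set and an $IP_{s}^{\star}$-set for some $s\in\mathbb{N}$; everything else is book-keeping.

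First I would observe that a $CR$-set is a $J$-set: comparing Definition \ref{J set} and Definition \ref{CR set} (or equivalently Definition \ref{K-CR set} with $k=1$), the $J$-set condition is the case $k=1$ of the $CR$ condition. Hence Theorem \ref{J-J} applies and gives that $A-A$ is an $IP^{\star}$-set in $R$. Second, Corollary \ref{CR-CR} directly gives an $s\in\mathbb{N}$ for which $A-A$ is an $IP_{s}^{\star}$-set in $R$.

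Now I would apply the previous theorem with this $s$, taking its ``$A$'' to be $A-A$ (as the $IP^{\star}$-set) and its ``$B$'' also to be $A-A$ (as the $IP_{s}^{\star}$-set). The conclusion is an $IP^{\star}$-set $D\subseteq R$ such that for every $d\in D$,
\[
\langle d\rangle\setminus\{0\}\ \subseteq\ (A-A)(A-A).
\]
It remains only to extract a nonzero witness $r\in D$. This is where the opening remark of Section 4 is used: $\{0\}$ is not an $IP^{\star}$-set in $R$ (since $R$ is a large Integral Domain and therefore infinite), so the $IP^{\star}$-set $D$ cannot be contained in $\{0\}$ and must contain some $r\neq 0$. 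For this $r$, one has $rR\setminus\{0\}\subseteq (A-A)(A-A)$, and since $0=(a-a)(b-b)\in (A-A)(A-A)$ trivially, we conclude $rR\subseteq (A-A)(A-A)$, as desired.

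There is no real obstacle here; the only point requiring mild care is that we are applying the previous theorem with the \emph{same} set $A-A$ playing both roles, which is legitimate because Theorem \ref{J-J} and Corollary \ref{CR-CR} independently supply the $IP^{\star}$ and $IP_{s}^{\star}$ properties of that single set. The nontrivial content of the argument is therefore entirely absorbed into the two theorems we are quoting.
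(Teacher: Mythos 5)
Your proof is correct and follows essentially the same route the paper intends: combine Corollary \ref{CR-CR} with the preceding theorem, letting $A-A$ play both the $IP^{\star}$ and $IP_{s}^{\star}$ roles, and use that the $IP^{\star}$-set $D$ must contain a nonzero element. The only (harmless) redundancy is your appeal to Theorem \ref{J-J} for the $IP^{\star}$ property, since an $IP_{s}^{\star}$-set is automatically $IP^{\star}$.
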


\section{Multiplicative and additive central sets}

It is famous and well known result \cite[Corollary 5.21.1]{HS} that if we partition $\mathbb{N}$ into finitely many cells, then atleast one cell is both additive and multiplicative central. We will prove the same result for large Integral Domain, motivated by the result in additive and multiplicative $IP$-sets in \cite{BH1}. To do so,  we start this section by calling the  following lemma.

\begin{lemma}{\cite[Lemma 4.6]{BG}}	
	Let $\left(S,\cdot\right)$ and $\left(T,\cdot\right)$ be semigroups, $\phi:\left(S,\cdot\right)\rightarrow \left(T,\cdot\right)$ be a homomorphism, $A\subseteq S$. If $A$ is central in $S$ and $\phi \left(S\right)$ is piecewise syndetic in $T$, then $\phi \left(A\right)$ is central in $T$.
\end{lemma}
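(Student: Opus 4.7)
The plan is to lift everything to the Stone-\v{C}ech compactifications and argue via the algebra of $\beta T$. Extend $\phi$ to its unique continuous homomorphism $\tilde{\phi}\colon\beta S\to\beta T$, available because $T$ is discrete and by the standard argument using right-continuity and density of $S$ in $\beta S$. Since $A$ is central, choose a minimal idempotent $p\in\beta S$ with $A\in p$. The goal is to show that $\tilde{\phi}(p)$ is a minimal idempotent of $\beta T$ containing $\phi(A)$.

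Two things are essentially free. First, $\tilde{\phi}(p)$ is an idempotent because $\tilde{\phi}$ is a homomorphism and $p^{2}=p$. Second, $\phi(A)\in\tilde{\phi}(p)$, by the description $\tilde{\phi}(p)=\{B\subseteq T:\phi^{-1}(B)\in p\}$ together with the fact that $\phi^{-1}(\phi(A))\supseteq A\in p$. It only remains to verify that $\tilde{\phi}(p)$ lies in the smallest ideal $K(\beta T)$, since the equivalence recalled in Section~1 (an idempotent is minimal iff it lies in $K(\beta T)$) will then identify $\tilde{\phi}(p)$ as minimal, and hence $\phi(A)$ as central.

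For this I would invoke the standard algebraic characterisation of piecewise syndeticity: a set $B\subseteq T$ is piecewise syndetic in $T$ iff $\overline{B}\cap K(\beta T)\neq\emptyset$. Since $\tilde{\phi}(\beta S)$ is a compact subsemigroup of $\beta T$ having $\phi(S)$ as a dense subset, we have $\tilde{\phi}(\beta S)=\overline{\phi(S)}$, and so the hypothesis yields $\tilde{\phi}(\beta S)\cap K(\beta T)\neq\emptyset$. Now set $M=\{q\in\beta S:\tilde{\phi}(q)\in K(\beta T)\}$. This is nonempty by the preceding line, and because $\tilde{\phi}$ is a homomorphism and $K(\beta T)$ is a two-sided ideal of $\beta T$, $M$ is a two-sided ideal of $\beta S$. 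Therefore $K(\beta S)\subseteq M$, and in particular $\tilde{\phi}(p)\in K(\beta T)$, as required.

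The main obstacle is the correct algebraic reformulation of piecewise syndeticity via $K(\beta T)$; once that characterisation is in hand, the ``pull-back'' ideal $M$ disposes of the minimality of $\tilde{\phi}(p)$ mechanically, and no further difficulty is anticipated.
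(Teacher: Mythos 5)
Your argument is correct. Note that the paper itself offers no proof of this statement: it is imported verbatim as \cite[Lemma 4.6]{BG}, so there is nothing internal to compare against. Your proof is essentially the standard one from the literature. All the individual steps check out: the continuous extension $\tilde{\phi}\colon\beta S\to\beta T$ is a homomorphism (the one point deserving a word is that the usual limit-interchange argument needs $\phi(S)\subseteq T$ to lie in the topological center of $\beta T$, which it does); $\tilde{\phi}(p)$ is then an idempotent with $\phi(A)\in\tilde{\phi}(p)$ since $\phi^{-1}(\phi(A))\supseteq A\in p$; the characterisation ``$B$ piecewise syndetic iff $\overline{B}\cap K(\beta T)\neq\emptyset$'' is \cite[Theorem 4.40]{HS}; $\tilde{\phi}(\beta S)=\overline{\phi(S)}$ by compactness and continuity; and $M=\tilde{\phi}^{-1}\left(K(\beta T)\right)$ is a nonempty two-sided ideal of $\beta S$, hence contains $K(\beta S)$ and in particular $p$, so $\tilde{\phi}(p)$ is an idempotent in $K(\beta T)$ and therefore minimal. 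No gaps.
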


For a large Integral Domain, $\left(R,+,\cdot\right)$, $\phi:\left(R,+\right)\rightarrow \left(R,+\right)$ be a homomorphism defined by $\phi\left(x\right)=rx$. Here $\phi \left(R\right)$ is $IP^{\star}$-set for $r\in R\setminus \left\{0\right\}$. So from the above lemma it is clear that, $\phi$ preserve additive central set in $R$.

Another important point is that $\left\{0\right\}$ is not additive  $J$ set, hence not additive central  set in $R$.

\begin{lemma}
	Let $A$ be an additive central${}^{\star}$-set in  the large Integral Domain $R$. Then $r^{-1}A$ is also  an additive central${}^{\star}$-set in  $R$ for any $r\in R\setminus \{0\}$.
\end{lemma}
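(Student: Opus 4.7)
The plan is to prove the lemma directly from the definition of central${}^\star$: I must show that $r^{-1}A$ meets every additive central set $C \subseteq R$. Fix such a $C$; I want to find $c \in C$ with $rc \in A$, i.e., $c \in r^{-1}A \cap C$.

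The key idea is to push $C$ forward by multiplication by $r$ and use that $A$ is central${}^\star$. Concretely, first I would consider the additive homomorphism $\phi : (R,+) \to (R,+)$ defined by $\phi(x) = rx$, which is exactly the map highlighted in the remark just before the lemma. Since $R$ is a large integral domain and $r \neq 0$, the image $\phi(R) = rR$ is a non-trivial principal ideal, hence has finite index, hence is an $IP^\star$-set, and in particular it is piecewise syndetic in $(R,+)$. Applying Lemma 4.6 of \cite{BG} (quoted at the start of this section) to the central set $C$ and the homomorphism $\phi$, I conclude that $\phi(C) = rC$ is additive central in $R$.

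Now since $A$ is additive central${}^\star$, it intersects every central set, so $A \cap rC \neq \emptyset$. Pick $a \in A \cap rC$ and write $a = rc$ for some $c \in C$. Then $rc = a \in A$, which by definition means $c \in r^{-1}A$. Hence $c \in r^{-1}A \cap C$, establishing that $r^{-1}A$ meets $C$. Since $C$ was an arbitrary additive central set, $r^{-1}A$ is additive central${}^\star$.

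I do not anticipate a real obstacle here: the main work has already been done in the preparatory paragraph, which records that $\phi(R) = rR$ is $IP^\star$ (hence piecewise syndetic) and that $\phi$ preserves additive central sets. The only thing to watch is the direction of the argument — one must push $C$ forward by $\phi$, not pull $A$ back, because central${}^\star$-ness is a statement about intersecting central sets, and we need the set produced in $R$ (namely $rC$) to be central so that the central${}^\star$ hypothesis on $A$ can bite.
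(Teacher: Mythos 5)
Your proposal is correct and follows essentially the same route as the paper: both arguments fix an arbitrary additive central set, push it forward to $rC$ (which is central by the remark preceding the lemma, i.e.\ Lemma 4.6 of \cite{BG} applied to $\phi(x)=rx$ with $rR$ piecewise syndetic), intersect with $A$ using the central${}^{\star}$ hypothesis, and pull the witness back. Your write-up is in fact slightly more careful than the paper's in justifying why $rC$ is central and in isolating the element $c$ with $rc\in A$.
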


\begin{proof}
	To prove that $r^{-1}A$ is an additive central${}^{\star}$-set in $R$, it is sufficient to show that for any additive  central set $B$ in $R$, we have $B\cap r^{-1}A\neq \emptyset$. Since $B$ is an  additive central set in the large Integral Domain $R$, $rB$ is also an additive central set in $R$, so $rB\cap A\neq \emptyset$. Choose $s\in rB\cap A$  and $t\in B$ such that $s=rt$. For being Integral Domain, $rt\in rB$ implies $t\in B$. Hence $t\in B\cap r^{-1}A\neq \emptyset$.
\end{proof}

\begin{lemma}\label{adi central implies Multiplica thick}
 Let $R$ be a large Integral Domain.	Let $A$ be an additive central${}^{\star}$-set in $R$. Then $A$ is multiplicative thick set in  $R$ and as a consequence $A$ is multiplicative central in $R$.
\end{lemma}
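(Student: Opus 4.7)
The plan is to prove multiplicative thickness of $A$ directly, and then derive multiplicative centrality from the standard fact that every multiplicatively thick set is multiplicatively central: if $A$ is thick in $(R\setminus\{0\},\cdot)$, its closure $\overline{A}\subseteq\beta(R\setminus\{0\})$ contains a left ideal, hence a minimal left ideal, hence a minimal idempotent (see \cite[Theorem 4.48]{HS}). Thus it suffices to show that for every finite subset $F=\{r_{1},r_{2},\ldots,r_{n}\}\subseteq R\setminus\{0\}$ there exists $x\in R\setminus\{0\}$ with $r_{i}x\in A$ for all $i$.

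The key step iterates the preceding lemma. Since $A$ is additive central${}^{\star}$, each $r_{i}^{-1}A=\{y\in R:r_{i}y\in A\}$ is again additive central${}^{\star}$. Recall that a subset of $R$ is additive central${}^{\star}$ if and only if it lies in every minimal idempotent of $(\beta R,+)$, and that ultrafilters are closed under finite intersections. Hence
$$B=\bigcap_{i=1}^{n}r_{i}^{-1}A$$
is itself additive central${}^{\star}$, and in particular nonempty. Any $x\in B$ satisfies $r_{i}x\in A$ for every $i$, so thickness reduces to finding such an $x$ with $x\neq 0$.

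For the last point we argue that every additive central${}^{\star}$ subset of $R$ is infinite, so we can always pick $x\in B$ with $x\neq 0$. The only additive idempotent in the integral domain $R$ is $0$, so the only principal ultrafilter that could be an idempotent in $(\beta R,+)$ is the point mass at $0$; but this point mass is the two-sided identity of $(\beta R,+)$ and therefore cannot lie in the proper ideal $K(\beta R,+)$. Consequently every minimal idempotent of $(\beta R,+)$ is free, so every additive central${}^{\star}$ set (including $B$) is infinite, and an $x\in B\setminus\{0\}$ exists. This completes the proof of thickness, from which multiplicative centrality follows as noted.

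The routine portion of the argument is the closure of additive central${}^{\star}$ sets under finite intersection and the passage from thick to central. The only slightly delicate point—and therefore the main obstacle to a completely mechanical proof—is guaranteeing that $x$ may be taken nonzero, which rests on the observation that $\{0\}$ is not an additive central set in a large integral domain.
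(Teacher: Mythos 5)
Your proposal is correct and follows essentially the same route as the paper: intersect the sets $r_{i}^{-1}A$, which are additive central${}^{\star}$ by the preceding lemma, to obtain a nonempty set witnessing multiplicative thickness, and then invoke thick $\Rightarrow$ central. Your additional care in justifying that the intersection is central${}^{\star}$ (hence nonempty) and that the witness $x$ can be chosen nonzero is welcome; the paper handles the latter point only via its remark, just before the lemma, that $\{0\}$ is not an additive central set in $R$.
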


\begin{proof}
	Let $k\in \mathbb{N}$. Let $F=\{r_{1},r_{2},\ldots,r_{k}\}\subset R\setminus \{0\}$. Then $$r_{1}^{-1}A/\cap r_{2}^{-1}A/\cap \ldots \cap r_{k}^{-1}A\neq \emptyset.$$ So, there exists $a\in A$ such that $Fa\subset A$.
\end{proof}

\begin{lemma}
Let $R$ be a large Integral Domain.	Let $A$ be an additive central${}^{\star}$-set in $R$. If we partition $A$ into finitely many cell i.e., $A=C_{1}\cup C_{2}\cup \ldots \cup C_{k}$ , then one cell is  multiplicative central in $R$.
\end{lemma}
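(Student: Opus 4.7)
The plan is to reduce the problem immediately to the algebra of $(\beta R,\cdot)$ via the preceding lemma and then use the ultrafilter property of minimal idempotents. First, I would apply Lemma \ref{adi central implies Multiplica thick} to conclude that $A$ is a multiplicative thick subset of $R$. This is the crucial input that lets us leave additive combinatorics behind and work purely in the multiplicative Stone--\v{C}ech compactification.

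Next, I would invoke the standard characterization (see \cite{HS}) that a subset $T$ of a semigroup $S$ is thick if and only if the closure $\overline{T}$ in $\beta S$ contains a left ideal of $\beta S$. Applied to $(R,\cdot)$, this gives a left ideal $L$ of $(\beta R,\cdot)$ with $L\subseteq \overline{A}$. Since every left ideal of a compact right topological semigroup contains a minimal left ideal, there exists a minimal left ideal $L_{0}\subseteq L\subseteq \overline{A}$ of $(\beta R,\cdot)$. By Ellis' theorem, $L_{0}$ contains an idempotent $e$, and since $L_{0}\subseteq K(\beta R)$, the idempotent $e$ is automatically a minimal idempotent of $(\beta R,\cdot)$. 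Because $e\in\overline{A}$, we have $A\in e$.

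Finally, the ultrafilter property finishes the job: if $A=C_{1}\cup C_{2}\cup\cdots\cup C_{k}$ and $A\in e$, then there exists $i\in\{1,2,\ldots,k\}$ such that $C_{i}\in e$. Since $e$ is a minimal idempotent of $(\beta R,\cdot)$, the cell $C_{i}$ is, by definition, multiplicative central in $R$.

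I do not foresee a serious obstacle: each step is a packaged black box once Lemma \ref{adi central implies Multiplica thick} is in hand. The one point that deserves care is matching conventions, namely that the notion of \emph{multiplicative thick} used in Lemma \ref{adi central implies Multiplica thick} (for every finite $F\subseteq R\setminus\{0\}$ there is $a$ with $Fa\subseteq A$) is precisely the left-thick notion that corresponds to a left ideal of $(\beta R,\cdot)$ sitting inside $\overline{A}$; this is visible from the proof of that lemma, where the witness is produced as $\bigcap_{r\in F} r^{-1}A$. Once this alignment is verified, the argument is essentially a two-line reading off of the algebraic structure of $K(\beta R,\cdot)$.
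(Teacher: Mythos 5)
Your argument is correct and is essentially the paper's own proof: the paper likewise deduces from Lemma \ref{adi central implies Multiplica thick} that $A$ is multiplicatively thick and then concludes via ``thick implies central.'' Your write-up simply makes explicit the step the paper leaves implicit, namely that thickness puts a minimal left ideal of $(\beta R,\cdot)$ inside $\overline{A}$, so a minimal idempotent $e$ has $A\in e$ and hence some cell $C_{i}\in e$.
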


\begin{proof}
	Follows from the Lemma \ref{adi central implies Multiplica thick} with the fact that thick set implies central set.
\end{proof}

\begin{theorem}\label{LiD mul ad central}
Let $R$ be a large Integral Domain. If we partition, $R$ into finitely many cells, then atleast one cell is both additive and multiplicative central set.
\end{theorem}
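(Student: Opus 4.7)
The plan is to reduce the theorem to the preceding lemma via a carefully chosen auxiliary subset. Given the partition $R = C_1 \cup C_2 \cup \cdots \cup C_n$, I would define $J = \{j \in \{1, 2, \ldots, n\} : C_j \text{ is additively central in } R\}$ and let $D = \bigcup_{j \in J} C_j$. Since $R$ itself is additively central (every additive minimal idempotent of $\beta R$ is an ultrafilter on $R$) and additive central sets are partition regular, $J$ is nonempty.

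The crucial step is to show that $D$ itself is an additive central${}^{\star}$-set in $R$. To establish this, I would take any additive central set $A \subseteq R$, choose an additive minimal idempotent $p \in \beta R$ with $A \in p$, and use that $p$ is an ultrafilter: exactly one cell $C_{j_0}$ of the partition lies in $p$, and this membership forces $C_{j_0}$ to be additively central, i.e., $j_0 \in J$. Since both $A$ and $C_{j_0}$ belong to $p$, we have $A \cap C_{j_0} \in p$, hence $A \cap C_{j_0} \neq \emptyset$, so $A \cap D \supseteq A \cap C_{j_0} \neq \emptyset$. Thus $D$ meets every additive central set in $R$, which is precisely the definition of additive central${}^{\star}$.

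Having identified $D$ as additive central${}^{\star}$, I would then invoke the preceding lemma on the partition $D = \bigcup_{j \in J} C_j$ to obtain some $j^{*} \in J$ for which $C_{j^{*}}$ is multiplicatively central. Because $j^{*}$ lies in $J$ by construction, $C_{j^{*}}$ is simultaneously additively central and multiplicatively central, which is exactly the desired conclusion. The only nonroutine step is the identification of $D$ as additive central${}^{\star}$; once that observation is in hand, the preceding lemma closes the argument immediately, so I anticipate no further difficulty.
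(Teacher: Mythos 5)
Your proposal is correct and follows essentially the same route as the paper: collect the additively central cells, observe that their union is additive central${}^{\star}$, and apply the preceding lemma to extract a cell that is also multiplicatively central. In fact you supply the ultrafilter argument justifying that this union is central${}^{\star}$, a step the paper merely asserts.
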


\begin{proof}
	Let $R=C_{1}\cup C_{2}\cup \ldots \cup C_{k}$. For simplicity, let only  $C_{1}, C_{2}, \ldots, C_{m}$ are all  additive central set. So, $$A=C_{1}\cup C_{2}\cup \ldots \cup C_{m}$$ is additive central${}^{\star}$-set in $R$. So atlest one $C_{i}$ for some $i\in \{{1,2,\ldots,m}\}$ is multiplicative central. 
\end{proof}

There are many Integral Domains known to us that are not large Integral Domains like $\mathbb{Z}[x]$, $\mathbb{F}[x]$ where $\mathbb{F}$ is any infinite field. A natural question arises as to whether the Theorem \ref{LiD mul ad central}  is true for all these  domains. We present a negative answer by the following theorem:

\begin{theorem}\label{non Lid }
Let $R$ be an Integral Domain, which is not large Integral Domain. Then there exists a partition of $R$, so that no cell is both additive and multiplicative central.
\end{theorem}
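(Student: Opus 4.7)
My plan is to exhibit a two-cell partition based on an infinite-index ideal. Since $R$ is not large, there exists a non-trivial ideal $I$ of $R$ with $[R:I]=\infty$. I propose the partition $R = I \cup (R\setminus I)$ and will verify that the first cell is not additively central, while the second cell is not multiplicatively central; together this gives that no cell is both.

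For the first cell, I would show $I$ is not additively piecewise syndetic in $(R,+)$; since every central set is piecewise syndetic, this forces $I$ to fail to be additively central. Fix any finite $F\subseteq R$; then $F+I$ is a union of at most $|F|$ cosets of $I$, so its image in $R/I$ has at most $|F|$ elements. Because $[R:I]=\infty$, I can choose a finite set $E\subseteq R$ whose cosets modulo $I$ are pairwise distinct with $|E|>|F|$. For every $x\in R$, the image of $E+x$ in $R/I$ still has $|E|$ distinct cosets, so $E+x\not\subseteq F+I$. Thus $F+I$ is not additively thick, and since $F$ was arbitrary, $I$ is not additively piecewise syndetic.

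For the second cell, I would work in the multiplicative semigroup $(R\setminus\{0\},\cdot)$, which is the natural setting since $\{0\}$ behaves trivially under multiplication and is excluded from combinatorial considerations earlier in the paper. Set $I^{\ast}=I\setminus\{0\}$. Using that $I$ is a ring-theoretic ideal and that $R$ has no zero divisors, for any $x\in R\setminus\{0\}$ and $y\in I^{\ast}$ the products $xy,yx$ lie in $I$ and are nonzero, so $I^{\ast}$ is a two-sided multiplicative ideal of $R\setminus\{0\}$. A standard computation (verify that for $p\in\overline{I^{\ast}}$ and $q\in\beta(R\setminus\{0\})$ both $pq$ and $qp$ contain $I^{\ast}$, exactly as in the earlier verification that closures of ideals are ideals) shows $\overline{I^{\ast}}$ is a two-sided ideal of $(\beta(R\setminus\{0\}),\cdot)$, so it contains the smallest ideal $K(\beta(R\setminus\{0\}),\cdot)$. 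Hence every minimal multiplicative idempotent $e$ satisfies $I^{\ast}\in e$, forcing $R\setminus I\notin e$; so $R\setminus I$ is not multiplicatively central.

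Putting the two pieces together finishes the proof: $I$ is not additively central and $R\setminus I$ is not multiplicatively central, so neither cell of $R=I\cup(R\setminus I)$ is both additive and multiplicative central. The multiplicative step is essentially formal once the ideal is in hand, while the genuinely combinatorial content is concentrated in the piecewise-syndetic argument of the second paragraph; that is where I expect the main obstacle to lie, since it is the place where the infinite-index hypothesis is exploited.
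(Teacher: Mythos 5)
Your proof is correct, and it uses the same two-cell decomposition as the paper: split $R$ into an infinite-index ideal and its complement, then show the first cell fails additive centrality and the second fails multiplicative centrality. Where you differ is in how the two non-centrality claims are verified. For the first cell the paper argues via the subgroup dichotomy of Section 2 (an additive subgroup of infinite index is not $IP^{\star}$, hence cannot be central), whereas you give a direct counting argument showing that a finite union of cosets of an infinite-index subgroup is never thick, so the ideal is not piecewise syndetic; your version is self-contained and arguably cleaner than the paper's somewhat compressed ``central $\Rightarrow$ $J$-set $\Rightarrow$ $IP^{\star}$'' chain. For the second cell the paper invokes dilation invariance of multiplicative upper Banach density to conclude that $R\setminus\alpha R$ has density zero; this implicitly relies on $(R\setminus\{0\},\cdot)$ admitting a F\o lner sequence and on the link between piecewise syndeticity and positive density. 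Your route instead observes that $I\setminus\{0\}$ is a two-sided semigroup ideal of $(R\setminus\{0\},\cdot)$, so its closure contains $K\bigl(\beta(R\setminus\{0\})\bigr)$ and hence every minimal multiplicative idempotent; this is purely algebraic, avoids density and amenability considerations altogether, and also makes explicit the (correct) convention of excluding $0$ from the multiplicative semigroup, a point the paper glosses over. Both approaches are valid; yours trades the paper's appeal to density machinery for the standard Stone--\v{C}ech ideal argument, which fits well with the framework already set up in the introduction.
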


\begin{proof}
As $R$ is not a large Integral Domain, there exists $\alpha\in R$ such that $\left[R:\alpha R\right]=\infty$.  
Let  $$R=\alpha R\cup(R\setminus\alpha R).$$ We claim that  $\alpha R$ is not  additive
central and $R\setminus\alpha R$ is not multiplicative central.\\
\textbf{proof of the first claim}: If $\alpha R$ is central set, then $\alpha R$ is also $J$-set. As $\alpha R$ is  a  additive subgroup of $R$, we have $\alpha R$ is an additive $IP^{\star}$-set in $R$, which contradicts the fact that $\left[R:\alpha R\right]=\infty$.\\
\textbf{proof of the second claim}:  As the multiplicative upper Banach density is dilation invariant, multiplicative upper Banach density of $\alpha R$ is $1$.  So,  $R\setminus\alpha R$ is of zero multiplicative upper Banach density and $R\setminus\alpha R$ is not multiplicative central.
\end{proof}

Combining Theorem \ref{non Lid } and Theorem \ref{LiD mul ad central}, we obtain all class of  Integral Domains such that a partition of  a particular type of Integral Domain  has at least one cell both additive and  multiplicative central. Obviously we get the following theorem.

\begin{theorem}
	Let $R$ be an Integral Domain. Then at least  one cell of any  partition of $R$  is both additive and multiplicative central if and only if $R$ is large Integral Domain. 
\end{theorem}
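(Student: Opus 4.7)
The statement is a biconditional whose two directions have essentially already been established as Theorem~\ref{LiD mul ad central} and Theorem~\ref{non Lid }, so the plan is simply to stitch them together and confirm that no extra work is needed. I would first remind the reader that the biconditional is logically equivalent to the conjunction of ``$R$ large $\Rightarrow$ partition regularity of additive-multiplicative centrality'' and its contrapositive ``$R$ not large $\Rightarrow$ some partition avoids joint centrality.''

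For the forward implication, I would appeal directly to Theorem~\ref{LiD mul ad central}: if $R$ is a large Integral Domain and $R = C_1 \cup C_2 \cup \cdots \cup C_k$, then the argument there (collecting the additively central cells into $A$, noting $A$ is additively central${}^{\star}$, hence multiplicatively central by Lemma~\ref{adi central implies Multiplica thick}, and then using partition regularity of multiplicative centrality on $A = C_1 \cup \cdots \cup C_m$) produces an index $i$ with $C_i$ both additively and multiplicatively central.

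For the reverse implication, I would argue contrapositively using Theorem~\ref{non Lid }: if $R$ is an Integral Domain that fails to be large, there exists $\alpha \in R$ with $[R:\alpha R]=\infty$, and the explicit two-cell partition $R = \alpha R \cup (R\setminus \alpha R)$ witnesses that neither cell can be simultaneously additively and multiplicatively central. Specifically $\alpha R$ is not additively central (being an additive subgroup of infinite index, it fails to be $IP^{\star}$ by Lemma~\ref{infinite index non ip*}, and any additive $J$-set — in particular any additive central set that is a subgroup — would have to be $IP^{\star}$ via Theorem~\ref{J-J}), while $R\setminus \alpha R$ fails to be multiplicatively central because multiplicative dilation invariance of upper Banach density forces $d^{\star}(\alpha R) = 1$, hence $d^{\star}(R\setminus \alpha R)=0$.

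There is no real obstacle — the content is entirely packaged in the two preceding theorems — so the proof amounts to invoking Theorem~\ref{LiD mul ad central} for one direction and Theorem~\ref{non Lid } for the other, and the write-up should be only a couple of lines pointing to those results.

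\begin{proof}
The forward direction is precisely Theorem~\ref{LiD mul ad central}. The converse direction is the contrapositive of Theorem~\ref{non Lid }: if $R$ is not a large Integral Domain, then the partition $R = \alpha R \cup (R\setminus \alpha R)$ constructed there has no cell which is simultaneously additively and multiplicatively central.
\end{proof}
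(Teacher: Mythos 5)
Your proposal is correct and matches the paper exactly: the paper itself derives this theorem by simply combining Theorem~\ref{LiD mul ad central} for the forward direction with Theorem~\ref{non Lid } for the converse. No further comment is needed.
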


\bibliographystyle{plain}

\end{document}